\newtheorem{theorem}{Theorem}[section]
\newtheorem{proposition}[theorem]{Proposition}
\newtheorem{corollary}[theorem]{Corollary}
\newtheorem{lemma}[theorem]{Lemma}
\DeclareMathOperator{\lps}{lps}
\DeclareMathOperator{\lpps}{lpps}
\DeclareMathOperator{\lpp}{lpp}
\DeclareMathOperator{\lppp}{lppp}
\DeclareMathOperator{\trim}{trim}
\DeclareMathOperator{\mirror}{mirror}
\title{Upper Bound for Palindromic and Factor Complexity of Rich Words}
\author{Josef Rukavicka\thanks{Department of Mathematics,
Faculty of Nuclear Sciences and Physical Engineering, Czech Technical University in Prague
(josef.rukavicka@seznam.cz).}}
\newtheorem{definition}[theorem]{Definition}
\theoremstyle{remark}
\newtheorem{example}[theorem]{Example}
\newtheorem{remark}[theorem]{Remark}
\date{\small{October 21, 2019}\\
   \small Mathematics Subject Classification: 68R15}
\begin{document}
\maketitle

\begin{abstract}
A finite word $w$ of length $n$ contains at most $n+1$ distinct palindromic factors. If the bound  $n+1$ is attained, the word $w$ is called rich. An infinite word $w$ is called rich if every finite factor of $w$ is rich.

Let $w$ be a word (finite or infinite) over an alphabet with $q>1$ letters, let $F(w,n)$ be the set of factors of length $n$ of the word $w$, and let $F_p(w,n)\subseteq F(w,n)$ be the set of palindromic factors of length $n$ of the word $w$. 

We present several upper bounds for $\vert F(w,n)\vert$  and $\vert F_p(w,n)\vert$, where $w$ is a rich word. In particular we show that \[\vert F(w,n)\vert \leq (q+1)8n^2(8q^{10}n)^{\log_2{2n}}+q\mbox{.}\]

In 2007, Bal{\'a}{\v z}i, Mas{\'a}kov{\'a}, and Pelantov{\'a} showed that \[\vert F_p(w,n)\vert +\vert F_p(w,n+1)\vert \leq \vert F(w,n+1)\vert-\vert F(w,n)\vert+2\mbox{,}\] where $w$ is an  infinite word whose set of factors is closed under reversal. We generalize this inequality for finite words.
\end{abstract}

\section{Introduction}
The field of combinatorics on words includes the study of \emph{palindromes} and \emph{rich} words. In recent years there have appeared several articles concerning this topic \cite{Ba_phd,BlBrLaVu11,DrJuPi,ScSh16}. Recall that a palindrome is a word that is equal to its \emph{reversal}, such as ``noon'' and ``level''. A word is called rich if it contains the maximal number of palindromic factors. It is known that a word of length $n$ can contain at most $n+1$ palindromic factors, including the empty word \cite{DrJuPi}. An infinite word $w$ is rich if every finite factor of $w$ is rich.

Rich words possess various properties; see, for instance, \cite{BaPeSta2,BuLuGlZa2,GlJuWiZa}. We will use two of them. The first uses the notion of a \emph{complete return}. Given a word $w$ and a factor $r$ of $w$, we call the factor $r$ a \emph{complete return} to $u$ in $w$ if $r$ contains exactly two occurrences of $u$, one as a prefix and one as a suffix. A property of rich words is that all complete returns to any palindromic factor $u$ in $w$ are palindromes \cite{GlJuWiZa}. 

The second property of rich words that we use says that a factor $r$ of a rich word $w$ is uniquely determined by its longest palindromic prefix and its longest palindromic suffix \cite{BuLuGlZa2}. Some generalizations of this property may be found in \cite{10.1007/978-3-319-66396-8_7}.

In the current article we present upper bounds for the palindromic and factor complexity of rich words. In other words, this means that we derive upper bounds for the number of palindromes and factors of given length in a rich word $w$. There are already some related results; see below.

Let us define $F(w,n)$ to be the set of factors of length $n$ of $w$, let $F_p(w,n)$ be the set of palindromic factors of length $n$ of $w$, and let $F(w)=\bigcup_{j\geq 0}F(w,j)$, where $w$ is a finite or infinite word. Let $w^R$ denote the reversal of $w=w_1w_2\cdots w_{n-1}w_n$, where $w_i$ are letters; formally $w^R=w_nw_{n-1}\cdots w_2w_1$. We say that a set $S$ of finite words is \emph{closed under reversal} if $w\in S$ implies that $w^R\in S$. 

It is clear that $\vert F_p(w,n)\vert \leq \vert F(w,n)\vert$. Some less obvious inequalities are known. One of the interesting inequalities is the following one \cite{BaMaPe, BaPeSta2}.
Given an infinite word $w$ with $F(w)$ closed under reversal, then \begin{equation}\label{rju589w65f5n2m6a}\vert F_p(w,n)\vert +\vert F_p(w,n+1)\vert \leq \vert F(w,n+1)\vert-\vert F(w,n)\vert+2\mbox{.}\end{equation} In order to prove the inequality (\ref{rju589w65f5n2m6a}) the authors used the notion of Rauzy graphs; a Rauzy graph is a subgraph of the de Bruijn graph \cite{SALIMOV2010_rauzy}. In Section $3$ we generalize this result for every word (finite or infinite) with $F(w,n+1)$ closed under reversal, which allows us to improve our upper bound from Section $2$ for the factor complexity of finite rich words. 

In \cite{AlBaCaDa}, another inequality has been proven for infinite non-ultimately periodic words: $\vert F_p(w,n)\vert< \frac{16}{n}\vert F(w,n+\lfloor \frac{n}{4}\rfloor)\vert$.

In \cite{RuSh16}, the authors show that a random word of length $n$ contains, on expectation, $\Theta(\sqrt{n})$ distinct palindromic factors.

Let $\Pi(n)$ denote the number of rich words of length $n$. If $w$ is a rich word then obviously $\vert F(w,n)\vert \leq \Pi(n)$. Hence the number of rich words forms the upper bound for the palindromic and factor complexity of rich words. 
The number of rich words was investigated in \cite{Vesti2014}, where the author gives a recursive lower bound on the number of rich words of length $n$, and an upper bound on the number of binary rich words. Better results can be found in \cite{GuShSh15}.
The authors of \cite{GuShSh15} construct for each $n$  a large set of rich words of length $n$. Their construction gives,  currently,  the best lower bound on the number of binary rich words, namely
$\Pi(n)\geq \frac{C^{\sqrt{n}}}{p(n)}$,
 where $p(n)$ is a polynomial and the constant $C \approx 37$. \\
Every factor of a rich word is also rich \cite{GlJuWiZa}. In other words,  the language of rich words is factorial. In particular, this means that
$\Pi(n)\Pi(m)\geq \Pi(n+m)$ for all $m, n\in \mathbb{N}$. Therefore,  Fekete's lemma  implies the existence of the limit of  $\sqrt[n]{\Pi(n)}$, and moreover \cite{GuShSh15}:
\[
\lim\limits_{n\rightarrow \infty}\sqrt[n]{\Pi(n)}= \inf \left \{\sqrt[n]{\Pi(n)} \colon n \in \mathbb{N} \right\}.
\]
For a fixed $n_0$,   one can find the number of all rich words of length $n_0$ and obtain an upper bound on the limit.
Using a computer Rubinchik counted $\Pi(n)$ for $n\leq 60$; see the sequence \url{https://oeis.org/A216264}. 
As  $ \sqrt[60]{\Pi(60)} < 1.605$, he obtained an upper bound for the binary alphabet: $\Pi(n)<c 1.605^n$ for some constant $c$ \cite{GuShSh15}.

In \cite{RukavickaRichWords2017}, the author shows that $\Pi(n)$ has a subexponential
growth on every finite alphabet.  Formally $\lim\limits_{n\rightarrow \infty}\sqrt[n]{\Pi(n)}=1$.
This result  is  an argument in favor of a conjecture formulated in \cite{GuShSh15}  saying that  for some infinitely growing function $g(n)$ the following holds for a binary alphabet:  
\[
{\Pi(n)} = \mathcal{O} \Bigl(\frac{n}{g(n)}\Bigr)^{\sqrt{n}}\mbox{.}
\]

As already mentioned, we construct upper bounds for palindromic and factor complexity of rich words. The proof uses the following idea. Let $u$ be a palindromic factor of a rich word $w$ on the alphabet $A$, such that $aub$ is factor of $w$, where $a,b\in A$ and $a\not =b$. Let $\lpp(w)$ and $\lps(w)$ denote the longest palindromic prefix and suffix of $w$ respectively. Then $\lpp(aub)$ and $\lps(aub)$ uniquely determine the factor $aub$ in $w$ \cite{BuLuGlZa2}. Let $\lpps(w)$ denote the longest proper palindromic suffix of $w$. We show that $a,b$ and $\lpps(u)$ also uniquely determine $aub$. In addition, we observe that either $\vert \lpps(u)\vert\leq \frac{1}{2}\vert u\vert$ or $u$ contains a palindromic factor $\bar u$ that uniquely determines $u$ and $\vert \bar u\vert \leq \frac{1}{2}\vert u\vert$. We obtain a ``short'' palindrome and letters $a,b$ that uniquely determine the ``long'' palindrome $u$ in the case when $aub$ is a factor of $w$. In these ``short'' palindromes there are again other ``shorter'' palindromes, and so on. As a consequence we present an upper bound for the number of factors of the form $aub$ with $\vert aub \vert=n$.
The property of rich words that all complete returns to any palindromic factor $u$ in $w$ are palindromes \cite{GlJuWiZa} allows us to prove that if $w$ contains the factors $xux$ and $yuy$, where $x,y \in A$ and $x\not =y$, then $w$ must contain a factor of the form $aub$, where $a,b\in A$ and $a\not =b$. This property demonstrates the relation between the factors $aub$ and palindromic factors $xux$. Due to this we derive an upper bound for the palindromic complexity of rich words. With the upper bound for palindromic complexity, the property that each factor is uniquely determined by its longest palindromic prefix and suffix \cite{BuLuGlZa2}, and the inequality (\ref{rju589w65f5n2m6a}) we obtain several upper bounds on palindromic and factor complexity. Principally we show that $\vert F_p(w,n)\vert, \vert F(w,n)\vert \leq n^{c\log_2{n}}$, where $n>2$ and $c$ is some constant depending on the size of the alphabet.

\section{Palindromic and factor complexity of rich words}
Consider an alphabet $A$ with $q$ letters, where $q>1$. Let $A^+=\bigcup_{j>0}A^j$ denote the set of all nonempty words over $A$, where $A^j$ is the set of words of length $j$.

Let $\epsilon$ denote the empty word, let $A^*=A^+\cup \{\epsilon\}$, and let $$A^{\infty}=\{w_1w_2w_3\cdots \mid w_i\in A\mbox{ and }i>0\}$$ be the set of infinite words.

Let $R_n\subseteq A^n$ be the set of rich words of length $n\geq 0$. Let $R^+=\bigcup_{j>0}R_j$ and $R^*=R^+\cup\{\epsilon\}$. In addition, we define $R^{\infty}\subseteq A^{\infty}$ to be the set of infinite rich words. Let $R=R^+\cup R^{\infty}$.

Let $\lps(w)$ and $\lpp(w)$ be the longest palindromic suffix and the longest palindromic prefix of a word $w\in A^*$ respectively. Additionally, we introduce $\lpps(w)$ to be the longest proper palindromic suffix and $\lppp(w)$ to be the longest proper palindromic prefix, where $\vert w\vert>1$; proper means that $\lpps(w)\not=w$ and $\lppp(w)\not =w$. For a word $w$ with $\vert w\vert\leq 1$ we define $\lppp(w)=\lpps(w)=\epsilon$.

Let $w=w_1w_2\cdots w_n$ be a word, where $w_i\in A$. We define $w[i]=w_i$ and $w[i,j]= w_iw_{i+1}\cdots w_j$, where $0<i\leq j\leq n$.

Moreover we define the following notation:
\begin{itemize}
\item
$P_n\subset A^n$: the set of palindromes of length $n\geq 0$.
\item
$P^+=\bigcup_{j> 0}P_j$ (the set of all nonempty palindromes).
\item
$F(w)$: the set of factors of the word $w\in A^*\cup A^{\infty}$.
\item
$F(w,n) = \{u\mid u\in F(w) \mbox{ and }\vert u\vert=n\}$ (the set of factors of length $n$).
\item
$F_p(w)=(P^+\cup\{\epsilon\})\cap F(w)$ (the set of palindromic factors).
\item
$F_p(w,n) = F(w,n)\cap P_n$ (the set of palindromic factors of length $n$).
\end{itemize}

\begin{definition}
Let $\trim(w)=w[2,\vert w\vert -1]$, where  $w\in A^*$ and $\vert w\vert> 2$. For $\vert w\vert\leq2$ we define $\trim(w)=\epsilon$. If $S$ is a set of words, then 
\[
\trim(S)=\{\trim(v)\mid v\in S\}\mbox{.}
\]
\end{definition}
\begin{remark}
The function $\trim(w)$ removes the first and last letter from $w$. 
\end{remark}
\begin{example} Suppose that  $A=\{0,1,2,3,4,5\}$. 
\begin{itemize}
\item
$\trim(01123501)=112350$.
\item
$\trim(\{12213, 112, 2,344\})=\{221,1,\epsilon,4\}$.
\end{itemize}
\end{example}
We will deal a lot with the words of the form $aub$, where $u$ is a palindrome and $a,b$ are distinct letters. Hence we introduce some more notation for them.
\begin{definition}
Given $w\in R$ and $n>2$, let
\[
\begin{split}
\gamma(w,n)=\{aub\mid aub\in F(w,n)\mbox{ and }u\in F_p(w,n-2) \\ \mbox{ and }a,b\in A\mbox{ and }a\not=b\}\mbox{.}
\end{split}
\]
If $n\leq 2$ then we define $\gamma(w,0)=\gamma(w,1)=\gamma(w,2)=\emptyset$.

Let $\bar \gamma(w,n) =\bigcup_{aub\in \gamma(w,n)}\{(u,a), (u,b)\}$, where $a,b\in A$.
Let $aub\in \gamma(w,n)$, where $a,b\in A$. We call the word $aub$ a $u$-\emph{switch} of $w$. Alternatively we say that $w$ contains a $u$-\emph{switch}.
\end{definition}
\begin{remark}
Note that a pair $(u,a)\in \bar \gamma(w,n)$ if and only if there exists $b\in A$ such that $aub\in \gamma(w,n)$ or $bua\in \gamma(w,n)$.
\end{remark}
\begin{example}
If $A=\{0,1,2,3,4,5,6\}$ and \[w=5112211311001131133114111146\] then:
\begin{itemize}
\item
$\gamma(w,8)=\{51122113,31133114,14111146\}$.
\item
$\trim(\gamma(w,8))=\{112211,113311,411114\}$.
\item
$\bar \gamma(w,8)=\{(112211,3),(112211,5),(113311,3), (113311,4), $\\
$(411114,1), (411114,6)\}$.
\item
$w$ does not contain $110011$-switch. Formally $110011\not \in \trim(\gamma(w,8))$.
\end{itemize}
\end{example}

\begin{remark}
The idea of a $u$-switch is inspired by the next lemma. If a rich word $w$ contains palindromes $aua$, $bub$, where $a,b\in A$, $a\not=b$, and $\vert aua\vert=\vert bub\vert=n$, then $w$ contains a $u$-switch of length $n$. The $u$-switch ``switches'' from $a$ to $b$. Note that $aua, bub\in F(w)$ does not imply that $aub\in F(w)$ or $bua \in F(w)$. It may be, for example, that $auc, cub\in F(w)$.  Nonetheless $(u,a),(u,b)\in \bar \gamma(w,n)$. The next lemma shows that if $aua, xuy\in F(w,n)$ then $(u,a)\in \bar \gamma(w,n)$, where $x,y\in A$ and $a\not =x$ or $a\not =y$.
\end{remark}

\begin{lemma}
\label{uk5855d}
Suppose $w\in R$ and suppose $u\in F_p(w,n-2)$, where $n> 2$. If $a,b_1,b_2\in A$, $\vert \{a,b_1,b_2\}\vert>1$, and $aua, b_1ub_2\in F(w,n)$ then $(u,a)\in \bar \gamma(w,n)$.
\end{lemma}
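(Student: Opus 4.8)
The plan is to prove the statement by contradiction: assume $(u,a)\notin\bar\gamma(w,n)$ and derive a contradiction with $aua,b_1ub_2\in F(w,n)$ and $|\{a,b_1,b_2\}|>1$. By the remark following the definition of $\gamma$, and since we are given $u\in F_p(w,n-2)$, the assumption $(u,a)\notin\bar\gamma(w,n)$ says exactly that for every letter $c\neq a$ we have $auc\notin F(w,n)$ and $cua\notin F(w,n)$; in words, in every occurrence of $u$ inside $w$ that is immediately preceded by $a$ the immediately following letter is again $a$, and symmetrically every occurrence immediately followed by $a$ is immediately preceded by $a$.

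The engine of the argument is the cited property \cite{GlJuWiZa} that every complete return to the palindromic factor $u$ in $w$ is itself a palindrome. First I would record a \emph{propagation} statement: if $xuy$ and $x'uy'$ are two occurrences of $u$ in $w$ with $x,y,x',y'\in A$ that are \emph{consecutive} (no occurrence of $u$ lies strictly between them), then $y=x'$. To see this, let $r$ be the complete return running from the first occurrence to the second. Then $r$ has $u$ both as a prefix and as a suffix, the letter $y$ sits at the position of $r$ directly after the prefix copy of $u$, and $x'$ sits at the position directly before the suffix copy of $u$; a short index computation shows these two positions are mirror images inside $r$, so the palindromicity of $r$ forces $y=x'$. (When the two occurrences overlap enough that these positions coincide, the identity $y=x'$ is immediate.)

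Then I would run this along the whole word. The occurrences of $u$ in $w$ form a single increasing sequence $p_1<p_2<\cdots$, and consecutive entries are consecutive occurrences in the above sense. Since $aua\in F(w,n)$, some $p_k$ carries a left and a right neighbouring letter, both equal to $a$. Propagating to $p_{k+1}$: its left neighbour equals the right neighbour of the $p_k$-occurrence, hence is $a$; if the $p_{k+1}$-occurrence also has a right neighbour, the standing assumption forces that right neighbour to be $a$, and we continue; the same works moving left through $p_{k-1},p_{k-2},\dots$. Hence every occurrence of $u$ in $w$ possessing both neighbours (i.e., not a prefix or suffix of $w$) has both of them equal to $a$. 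But $b_1ub_2\in F(w,n)$ exhibits an occurrence of $u$ with left neighbour $b_1$ and right neighbour $b_2$, both present, so $b_1=b_2=a$, contradicting $|\{a,b_1,b_2\}|>1$. Therefore $(u,a)\in\bar\gamma(w,n)$.

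The hard part here is bookkeeping rather than ideas: one must verify the mirror-position computation in the propagation step in all degenerate regimes (a single-letter $u$ when $n=3$; consecutive occurrences of $u$ that overlap heavily; occurrences of $u$ sitting at the very start or end of $w$ and lacking one neighbour), and check that the leftward and rightward inductions really do reach the occurrence coming from $b_1ub_2$ — which they do, because all occurrences of $u$ lie on the single chain $p_1<p_2<\cdots$.
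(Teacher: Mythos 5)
Your proof is correct and rests on the same mechanism as the paper's: the palindromicity of complete returns to $u$ in a rich word forces the letter following one occurrence of $u$ to equal the letter preceding the next occurrence, and this, combined with $aua,b_1ub_2\in F(w,n)$, produces the switch. The paper argues directly, exhibiting a single complete return in which $aua$ is unioccurrent, whereas your contrapositive propagation along the whole chain of occurrences reaches the same conclusion and, as a bonus, makes explicit the existence claim that the paper dismisses with ``it is obvious that such $r$ exists.''
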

\begin{remark}
The condition $\vert \{a,b_1,b_2\}\vert>1$ in Lemma \ref{uk5855d} means that at least one letter is different from the others.
\end{remark}
\begin{proof}
Let $r$ be a factor of $w$ such that $aua$ is unioccurrent in $r$ and $\trim(r)$ is a complete return to $u$ in $w$. Since $aua$ and $b_1ub_2$ are factors of $w$, it is obvious that such $r$ exists. Clearly there are $x_1,x_2,y_1,y_2\in A$ such that $x_1ux_2$ is a prefix of $r$ and $y_1uy_2$ is a suffix of $r$. The complete return $\trim(r)$ to $u$ is a palindrome \cite{GlJuWiZa}. Hence $x_2=y_1$. Since $aua$ is unioccurrent in $r$, it follows that $x_2=y_1=a$, $x_1\not=y_2$, and $a\in \{x_1,y_2\}$. In consequence we have that $(u,a)\in \bar \gamma(w,n)$.
\end{proof}
To clarify the previous proof, let us consider the following two examples. For both examples suppose that $A=\{1,2,3,4,5,6\}$.
\begin{example}
Let $w=321234321252126$.
Let $aua=32123$ and $b_1ub_2=52126$. Then $r=32123432125$ and $\trim(r)=212343212$ is a complete return to $212$. Therefore $(212,3)\in \bar \gamma(w,5)$. Note that $b_1ub_2$ is not a factor of $r$.
\end{example}
\begin{example}
Let $w=321234321252$.
Let $aua=32123$ and $xuy=b_1ub_2=32125$. Then $r=32123432125$ and $\trim(r)=212343212$ is a complete return to $212$. Therefore $(212,3)\in \bar \gamma(w,5)$. Note that $b_1ub_2$ is a factor of $r$.
\end{example}
We show that the number of palindromic factors and the number of $u$-switches are related.
\begin{proposition}
\label{gt569wp}
If $w\in R$ and $n>2$ then
\[2\vert \gamma(w,n) \vert + \vert F_p(w,n-2) \vert \geq \vert F_p(w,n)\vert\mbox{.}\]
\end{proposition}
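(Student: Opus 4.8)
The plan is to study the restriction of $\trim$ to $F_p(w,n)$. First I would note that every $v\in F_p(w,n)$ is a palindrome of length $n>2$, so its first and last letters coincide and $v=aua$ with $a\in A$ and $u=\trim(v)$; since $v$ is a palindrome so is $u$, and it is a factor of $w$, hence $u\in F_p(w,n-2)$. Thus $\trim$ maps $F_p(w,n)$ into $F_p(w,n-2)$, and for a fixed $u\in F_p(w,n-2)$ the fiber over $u$ is exactly $\{aua\mid a\in A,\ aua\in F(w,n)\}$ (note $aua$ is automatically a palindrome when $u$ is). Writing $c(u)$ for the cardinality of this fiber, I get $\vert F_p(w,n)\vert=\sum_{u\in F_p(w,n-2)}c(u)$, and hence
\[
\vert F_p(w,n)\vert-\vert F_p(w,n-2)\vert=\sum_{u\in F_p(w,n-2)}\bigl(c(u)-1\bigr)\mbox{.}
\]

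Next I would bound this sum by $\vert\bar\gamma(w,n)\vert$. Fix $u$ with $c(u)\geq2$ and choose distinct letters $a_1,\dots,a_{c(u)}$ with $a_iua_i\in F(w,n)$. For each $i$, Lemma \ref{uk5855d} applied with $a=a_i$ and $b_1ub_2=a_jua_j$ for some $j\neq i$ (the hypotheses $a_iua_i,a_jua_j\in F(w,n)$ and $\vert\{a_i,a_j\}\vert>1$ both hold) yields $(u,a_i)\in\bar\gamma(w,n)$. So all $c(u)$ pairs $(u,a_1),\dots,(u,a_{c(u)})$ lie in $\bar\gamma(w,n)$; in particular $\bar\gamma(w,n)$ contains at least $c(u)\geq c(u)-1$ pairs with first coordinate $u$. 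For $u$ with $c(u)\leq1$ the summand $c(u)-1$ is non-positive. Since pairs with different first coordinates are distinct, summing over $u\in F_p(w,n-2)$ gives $\sum_u(c(u)-1)\leq\vert\bar\gamma(w,n)\vert$.

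Finally, from $\bar\gamma(w,n)=\bigcup_{aub\in\gamma(w,n)}\{(u,a),(u,b)\}$ each element of $\gamma(w,n)$ contributes at most two pairs, so $\vert\bar\gamma(w,n)\vert\leq2\vert\gamma(w,n)\vert$. Chaining the three estimates gives $\vert F_p(w,n)\vert-\vert F_p(w,n-2)\vert\leq2\vert\gamma(w,n)\vert$, which rearranges to the statement of Proposition \ref{gt569wp}.

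The step I expect to be the crux is the middle one: recognizing that Lemma \ref{uk5855d} is precisely the device that converts ``$u$ has two distinct one-letter palindromic extensions $aua$'' into ``$u$ is the core of a $u$-switch'', and then organizing the count so that the $-1$ charged to each palindrome $u$ of length $n-2$ is absorbed; in fact there is slack here, since the lemma produces $c(u)$ pairs rather than $c(u)-1$. Some bookkeeping care is also needed for palindromes $u$ of length $n-2$ admitting fewer than two extensions of the form $aua$, but these contribute non-positively to the sum and so cause no difficulty.
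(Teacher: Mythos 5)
Your proof is correct and follows essentially the same route as the paper: the key step in both is invoking Lemma \ref{uk5855d} to turn two distinct palindromic extensions $a_iua_i$, $a_jua_j$ into membership of the pairs $(u,a_i)$ in $\bar\gamma(w,n)$, combined with $\vert\bar\gamma(w,n)\vert\leq 2\vert\gamma(w,n)\vert$. The only difference is bookkeeping — you count fibers of $\trim$ over $F_p(w,n-2)$, while the paper partitions $F_p(w,n)$ according to whether $\trim(v)$ is the core of a switch — and both accountings are valid.
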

\begin{proof}
Let $\omega(w,n)=\{aua\vert (u,a)\in \bar \gamma(w,n)\}$. Less formally said, $\omega(w,n)$ is a set of palindromes of length $n$ such that if $w$ contains a $u$-switch $aub$ then $aua,bub\in \omega(w,n)$. Obviously we have that \begin{equation}\label{gkkdj5d4f85}\vert \omega(w,n)\vert \leq 2\vert \gamma(w,n)\vert\mbox{.}\end{equation}

Let \[\tilde F_p(w,n)=\{v\mid v\in F_p(w,n)\mbox{ and }\trim(v)\in\trim(\gamma(w,n))\}\]
and \[\dot F_p(w,n)=\{v\mid v\in F_p(w,n)\mbox{ and }\trim(v)\not\in\trim(\gamma(w,n))\}\mbox{.}\]

Obviously $F_p(w,n)=\tilde F_p(w,n)\cup \dot F_p(w,n)$ and $\tilde F_p(w,n)\cap \dot F_p(w,n)=\emptyset$. It follows that 
\begin{equation}
\label{eq5s6d8984}
\vert \tilde F_p(w,n)\vert + \vert \dot F_p(w,n)\vert=\vert F_p(w,n)\vert\mbox{.}
\end{equation}
Suppose $v\in F_p(w,n)$ and let $u=\trim(v)$.
\begin{itemize}
\item
If $v \in \tilde F_p(w,n)$ then $w$ contains a $u$-switch. From Lemma  \ref{uk5855d} it follows that $v\in \omega(w,n)$; this and (\ref{gkkdj5d4f85}) imply that  
\begin{equation}\label{gg5587d4}\vert \tilde F_p(w,n)\vert \leq \vert \omega(w,n)\vert \leq 2\vert \gamma(w,n)\vert\mbox{.}\end{equation}
\item
If $v \not \in \tilde F_p(w,n)$ then $w$ does not contain a $u$-switch. We have that $u\in F_p(w,n-2)\setminus \trim(\gamma(w,n))$. Obviously if $t\in F_p(w,n-2)\setminus \trim(\gamma(w,n))$, $a,b\in A$, and $w$ has palindromic factors $ata$ and $btb$, then $a=b$ since $w$ does not contain a $t$-switch. It follows that 
\begin{equation}\label{ttg5f4f58r}\vert \dot F_p(w,n)\vert\leq \vert F_p(w,n-2)\vert\mbox{.}\end{equation}
\end{itemize}
The proposition follows from (\ref{eq5s6d8984}), (\ref{gg5587d4}), and (\ref{ttg5f4f58r}).
\end{proof}
To clarify the previous proof, let us consider the following example.
\begin{example}
If $A=\{0,1,2,3,4,5,6,7,8\}$ and
\[w=2110112333211011454110116110116778776\] then
\begin{itemize}
\item
$\gamma(w,7)=\{2110114,4110116\}$,
\item
$F_p(w,7)=\{1233321,2110112,1145411,6110116,6778776\}$,
\item
$\tilde F_p(w,7)=\{2110112, 6110116\}$,
\item
$\dot F_p(w,7)=\{1233321, 1145411,6778776\}$,
\item
$F_p(w,5)=\{23332,11011,14541,77877\}$,
\item
$2\vert \gamma(w,7) \vert + \vert F_p(w,5) \vert \geq \vert F_p(w,7)\vert$, and
\item
$4 + 4>5$.
\end{itemize}
\end{example}
In the next proposition we show that if $a,b$ are different letters and $aub$ is a switch of a rich word $w$ then the longest proper palindromic suffix $r$ of $u$ and the letters $a,b$ uniquely determine the palindromic factor $u\in F_p(w)$. 

\begin{proposition}
\label{tu9856rg65k}
If $w\in R$, $u,v\in F_p(w)$, $\lpps(u)=\lpps(v)$, $a,b\in A$, $a\not = b$, and $aub,avb\in F(w)$ then $u=v$.
\end{proposition}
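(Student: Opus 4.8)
The plan is to derive this from the theorem of Bucci, De Luca, Glen and Zamboni that a factor of a rich word is uniquely determined by its longest palindromic prefix and its longest palindromic suffix. Since $a\neq b$, neither $aub$ nor $avb$ is a palindrome, so $\lps(aub)$ is the longest palindromic suffix of the length-$(|u|+1)$ suffix $ub$ of $aub$, and $\lpp(aub)=\bigl(\lps((aub)^{R})\bigr)^{R}=\bigl(\lps(ua)\bigr)^{R}$ because $u^{R}=u$; the analogous identities hold for $avb$. Hence it suffices to establish the two equalities $\lps(uc)=\lps(vc)$ for $c\in\{a,b\}$: then $aub$ and $avb$ have equal longest palindromic prefix and suffix, so $aub=avb$, and therefore $u=v$.

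The second step is a description of $\lps(uc)$ for an arbitrary palindrome $u$ and a letter $c$. Write $r=\lpps(u)$; since $u$ is a palindrome, $r=\lppp(u)$, so $r$ is simultaneously a prefix and a suffix of $u$, and the iterated chain $r,\ \lpps(r),\ \lpps(\lpps(r)),\dots$ is determined by $r$ alone. Any palindromic suffix of $uc$ of length at least $2$ has the form $czc$ with $z$ a palindromic prefix of $u$ that is immediately followed in $u$ by the letter $c$; each such $z$ is a proper prefix of $u$ and hence a member of the above chain, and for every member $z$ of the chain other than $r$ itself the letter of $u$ following $z$ lies inside the prefix occurrence of $r$ in $u$ and is thus determined by $r$. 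Consequently $\lps(uc)$ is a function of $r$, of $c$, and of the single bit ``$u[|r|+1]=c$?''; explicitly, $\lps(uc)=crc$ when $u$ begins with $rc$, and otherwise $\lps(uc)$ depends only on $r$ and $c$. Since $\lpps(u)=\lpps(v)=r$, the same formula governs $\lps(vc)$.

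It therefore remains to prove that $u$ begins with $ra$ iff $v$ does, and $u$ begins with $rb$ iff $v$ does; I expect this to be the main obstacle, and it is the only place where richness and the coexistence of $aub$ and $avb$ in the single word $w$ are needed. I would argue by contradiction. Because $w$ is rich, every complete return to the palindromic factor $r$ of $w$ is a palindrome, so (taking the shortest prefix of $u$ containing two occurrences of $r$ and comparing its length with $|\lppp(u)|$) each of $u,v$ is a complete return to $r$, and its interior letter $u[|r|+1]$ equals the letter preceding the suffix occurrence of $r$. If, say, $u$ began with $rb$ but $v$ did not, then $aub$ would end with the palindrome $brb$ and begin with $arb$, while $avb$ would end with $crb$ for some letter $c\neq b$; feeding the palindrome $r$ together with the factors $brb$ and $crb$ (of length $|r|+2$) into Lemma~\ref{uk5855d} produces an $r$-switch of $w$ that is incompatible with the prescribed left and right neighbours of the two occurrences of $r$ inside $u$ and inside $v$, a contradiction. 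The remaining sub-cases, as well as the degenerate cases ($|u|\leq 1$, or $u$ or $v$ a power of a single letter), are handled in the same spirit. Once both bits agree we get $\lps(uc)=\lps(vc)$ for $c\in\{a,b\}$, hence $\lpp(aub)=\lpp(avb)$ and $\lps(aub)=\lps(avb)$, and the Bucci--De Luca--Glen--Zamboni theorem gives $aub=avb$, so $u=v$.
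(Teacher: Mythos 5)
Your reduction is correct, and it is essentially a cleaner rewriting of what the paper does. Since $u$ and $v$ are palindromes, their proper palindromic prefixes form the chain $r\supsetneq\lpps(r)\supsetneq\cdots\supsetneq\epsilon$, every chain element other than $r$ is followed in $u$ (and in $v$) by a letter dictated by $r$ itself, and hence $\lpp(aub)$, $\lps(aub)$, $\lpp(avb)$, $\lps(avb)$ are determined by $r$, $a$, $b$ together with the two bits $[u[\vert r\vert+1]=a]$, $[u[\vert r\vert+1]=b]$ and their analogues for $v$; by the Bucci--De Luca--Glen--Zamboni theorem everything then hinges on the agreement of these bits. This matches the paper's observation that the cores $x,y,p,s$ of the four words $\lpp(aub),\lpp(avb),\lps(aub),\lps(avb)$ are all prefixes of $r$, and your ``main obstacle'' is exactly the paper's sub-case $y=r$ (resp.\ $p=r$). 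Your auxiliary observation that $u$ and $v$ are complete returns to $r$ is also correct, with the proof you sketch.

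The gap is in the final step, and it is genuine. Suppose $u[\vert r\vert+1]=b$ and $v[\vert r\vert+1]=c\neq b$. Feeding the palindrome $brb$ and the factor $crb$ into Lemma~\ref{uk5855d} yields only $(r,b)\in\bar\gamma(w,\vert r\vert+2)$, i.e.\ the existence of \emph{some} $r$-switch of length $\vert r\vert+2$ with $b$ on one side. But such a switch is already visibly present: the first $\vert r\vert+2$ letters of $aub$ are $arb$, and $a\neq b$, so $arb\in\gamma(w,\vert r\vert+2)$. The lemma therefore returns information you already have, and no incompatibility follows; the phrase ``incompatible with the prescribed left and right neighbours'' is never given content, and at the level of length-$(\vert r\vert+2)$ factors there is none --- the factors $arb$, $brb$, $arc$, $crb$ and the palindromic complete returns $u$, $v$ to $r$ coexist perfectly well with the complete-return property for $r$, because that property only links the two interior neighbours of $r$ \emph{within a single} complete return and never links $u$'s interior letter to $v$'s. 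So the statement you correctly isolated as carrying the whole content of the proposition (``$u$ begins with $rb$ iff $v$ does,'' and likewise for $a$) is not actually proved; closing it requires an argument going beyond Lemma~\ref{uk5855d} (and, incidentally, your deferred degenerate cases are not harmless: for $u=\epsilon$, $v=a$, $w=aab$ all hypotheses of the statement as written are met with $u\neq v$, so any complete treatment must restrict to $\vert u\vert=\vert v\vert$ as in the paper's application).
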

\begin{proof}
It is known that if $r,t$ are two factors of a rich word $w$ and $\lps(r)=\lps(t)$ and $\lpp(r)=\lpp(t)$, then $r=t$ \cite{BuLuGlZa2}. We will identify a $u$-switch by the longest proper palindromic suffix of $u$ and two distinct letters $a,b$ instead of by the functions $\lps$ and $\lpp$.

Given a $u$-switch $aub$ where $a\not =b$, $a,b\in A$, we know that $\lps(aub)$ and $\lpp(aub)$ uniquely determine the factor $aub$ in $w$.
We will prove that for given $a,b\in A$, $a\not =b$, $n\geq 0$, and a palindrome $r$ there is at most one palindrome $u\in F_p(w)$ such that $\lpps(u)=r$ and $aub\in \gamma(w, \vert aub\vert)$. 

Suppose, to get a contradiction, that there are $u,v\in F_p(w)$, $u\not =v$, $a,b\in A$, $a\not =b$ such that $\lps(aub)=bpb$, $\lps(avb)=bsb$, $\lpp(aub)=axa$, $\lpp(avb)=aya$, $\lpps(u)=\lpps(v)=r$, and $aub,avb\in \bigcup_{j>0}\gamma(w,j)$. This implies that $p,s,x,y$ are prefixes of $r$. Thus if $x\not =y$, then $\vert x\vert\not=\vert y\vert$. Without loss of generality, let $\vert x\vert<\vert y\vert$. Since $y$ is a prefix of $r$, either $ya$ is a prefix of $r$ or $r=y$. Consequently $aya$ is a prefix of both $aub$ and $avb$, and this contradicts the assumption that $\lpp(aub)=axa$; $aya$ is a prefix of $aub$ and $\vert aya\vert>\vert axa\vert$. Analogously if $p\not =s$. It follows that  $x=y$ and $p=s$. Therefore $\lpp(aub)=\lpp(avb)$ and $\lps(aub)=\lps(avb)$, which would imply that $u=v$, which is a contradiction.

Hence we conclude that $a,b\in A$, $a\not =b$, and a palindrome $r$ determine at most one palindrome $u\in F_p(w)$ such that $\lpps(u)=r$ and $u\in \trim(\gamma(w,\vert u\vert+2))$.
\end{proof}
In the following we derive an upper bound for the number of $u$-switches. We need one more definition to be able to partition the set $\trim(\gamma(w,n+2))$ into subsets based on the longest proper palindromic suffix.

\begin{definition}
Given $w\in R$, $r\in R^+$ and $n\geq 0$, let
\[\Upsilon(w,n,r)=\{u\mid u\in \trim(\gamma(w,n+2)) \mbox{ and }\lpps(u)=r\}\mbox{.}\]
\end{definition}
\begin{remark}
The set $\Upsilon(w,n,r)$ contains palindromic factors $u$ of length $n$ of the word $w$ having the longest proper palindromic suffix equal to $r$ and such that $w$ contains a $u$-switch.
Obviously $\bigcup_{r\in F_p(w)}\Upsilon(w,n,r)=\trim(\gamma(w,n+2))$ and $\Upsilon(w,n,r)\cap\Upsilon(w,n,\bar r)=\emptyset$ if $r\not=\bar r$.
\end{remark}

\begin{example}
If $A=\{0,1,2,3,4,5\}$ and
$w=5112211311001131133114$ then
\begin{itemize}
\item
$\gamma(w,6)=\{51122113, 31133114\}$,
\item
$\Upsilon(w,6,11)=\{112211,113311\}$, and 
\item
$110011\not \in \Upsilon(w,6,11)$, because $w$ does not contain a $110011$-switch.
\end{itemize}
\end{example}
A simple corollary of the previous proposition is that the size of the set $ \Upsilon(w,n,r)$ is limited by the constant $q(q-1)$. Recall that $q$ is the size of the alphabet $A$. 
\begin{corollary}
\label{heu586wlp8}
If $w,r\in R$ and $n\geq 0$ then $\vert \Upsilon(w,n,r) \vert\leq q(q-1)\mbox{.}$
\end{corollary}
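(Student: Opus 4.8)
The plan is to exhibit an injection from $\Upsilon(w,n,r)$ into the set of ordered pairs of distinct letters of $A$, which has exactly $q(q-1)$ elements.

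First I would unwind the definitions. Every $u\in\Upsilon(w,n,r)$ lies in $\trim(\gamma(w,n+2))$ and satisfies $\lpps(u)=r$. Membership in $\trim(\gamma(w,n+2))$ means there exist $a,b\in A$ with $a\not=b$ such that $aub\in\gamma(w,n+2)$; moreover, since $\gamma(w,n+2)\subseteq F(w,n+2)\subseteq F(w)$ we have $aub\in F(w)$, and since $u=\trim(aub)$ with $aub\in\gamma(w,n+2)$ we have $u\in F_p(w,n)\subseteq F_p(w)$. So for each $u\in\Upsilon(w,n,r)$ fix one such ordered pair and call it $\phi(u)=(a_u,b_u)$. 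This defines a map
\[
\phi\colon \Upsilon(w,n,r)\longrightarrow \{(a,b)\in A\times A\colon a\not=b\}\mbox{.}
\]

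The heart of the argument is that $\phi$ is injective, and this is immediate from Proposition~\ref{tu9856rg65k}. Suppose $u,v\in\Upsilon(w,n,r)$ with $\phi(u)=\phi(v)=(a,b)$. Then $u,v\in F_p(w)$, $a\not=b$, $aub,avb\in F(w)$, and $\lpps(u)=r=\lpps(v)$, so Proposition~\ref{tu9856rg65k} forces $u=v$. Hence $\vert\Upsilon(w,n,r)\vert\leq\vert\{(a,b)\in A\times A\colon a\not=b\}\vert=q(q-1)$.

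There is essentially no obstacle here; the statement is a direct corollary, the only things to verify being that $\gamma(w,n+2)\subseteq F(w)$ so the hypothesis of Proposition~\ref{tu9856rg65k} is met, and that the degenerate cases $n\leq 0$, where $\gamma(w,n+2)=\emptyset$ and hence $\Upsilon(w,n,r)=\emptyset$, cause no trouble. One may also note that the bound holds uniformly in $r$ and $n$, which is exactly what the injection delivers.
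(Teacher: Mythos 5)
Your proof is correct and follows essentially the same route as the paper: both arguments bound $\vert\Upsilon(w,n,r)\vert$ by the number of ordered pairs of distinct letters via the uniqueness statement of Proposition~\ref{tu9856rg65k}. Your version merely makes the injection explicit, which is a fine (and slightly more careful) way to write the same one-line argument.
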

\begin{proof}
From Proposition \ref{tu9856rg65k} it follows that  $$\vert \Upsilon(w,n,r)\vert \leq \vert\{(a,b)\mid a,b\in A\mbox{ and }a\not=b\}\vert=q(q-1)\mbox{.}$$ In other words, $\vert \Upsilon(w,n,r)\vert$ is equal or smaller that the number of pairs of distinct letters $(a,b)$.
\end{proof}
We define $\bar \Gamma(w,n)=\max\{\vert \gamma(w,i)\vert\mid 0\leq i\leq n \}$, where $w\in R$ and $n\geq 0$. Furthermore we define $\Gamma(w,n)=\max\{1,\bar \Gamma(w,n)\}$.
\begin{remark}
We defined $\Gamma(w,n)$ as the maximum from the set of sizes of $\gamma(w,i)$, where $0\leq i\leq n$. In addition, we defined that $\Gamma(w,n)\geq 1$. This is just for practical reason. In this way we can find a constant $c$ such that $\Gamma(w,n_1)=c\Gamma(w,n_2)$ for any $n_1,n_2\geq 0$. The function $\Gamma(w,n)$ will allow us to present another relation between the number of palindromic factors of length $n$ and the number of $u$-switches without using $F_p(w,n-2)$.
\end{remark}
\begin{lemma}
\label{pfw88q} If $w\in R$ and $n>0$ then
$$(q+1)n\Gamma(w,n)\geq \vert F_p(w,n)\vert\mbox{.}$$
\end{lemma}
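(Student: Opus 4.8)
The plan is to bound $\vert F_p(w,n)\vert$ by splitting the palindromic factors of length $n$ into those that sit inside some $u$-switch and those that do not, exactly as in the proof of Proposition \ref{gt569wp}, but then to iterate the recursion so as to eliminate the term $\vert F_p(w,n-2)\vert$ in favour of the single quantity $\Gamma(w,n)$. Recall from (\ref{gg5587d4}) and (\ref{ttg5f4f58r}) that
\[
\vert F_p(w,n)\vert \le 2\vert\gamma(w,n)\vert + \vert F_p(w,n-2)\vert \le 2\Gamma(w,n) + \vert F_p(w,n-2)\vert .
\]
Since $\Gamma(w,\cdot)$ is monotone in its second argument, $\Gamma(w,n-2)\le\Gamma(w,n)$, so the same inequality with $n$ replaced by $n-2$ gives $\vert F_p(w,n-2)\vert \le 2\Gamma(w,n) + \vert F_p(w,n-4)\vert$, and so on. Unrolling this chain down to the base case ($n\bmod 2\in\{0,1,2\}$, where $\vert F_p(w,j)\vert\le 1$ for $j\le 1$ and $\vert F_p(w,2)\vert\le q$) yields a bound of the shape $\vert F_p(w,n)\vert \le 2\lceil n/2\rceil\,\Gamma(w,n) + (\text{base term})$.

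The only subtlety is getting the constants to land on the clean bound $(q+1)n\Gamma(w,n)$. There are at most $\lceil n/2\rceil \le (n+1)/2$ steps in the recursion before reaching length $0$ or $1$; the base term is at most $q$ (it is either $\vert F_p(w,0)\vert=1$, or $\vert F_p(w,1)\vert\le q$, or $\vert F_p(w,2)\vert\le q$, and in every case it is $\le q\le q\Gamma(w,n)$ since $\Gamma(w,n)\ge 1$). Each step contributes $2\Gamma(w,n)$. So $\vert F_p(w,n)\vert \le 2\cdot\frac{n+1}{2}\,\Gamma(w,n) + q\,\Gamma(w,n) = (n+1+q)\Gamma(w,n)$. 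For this to be $\le (q+1)n\Gamma(w,n)$ one needs $n+1+q\le (q+1)n$, i.e. $(q+1)n - n - q - 1 = qn - q - 1 = q(n-1) - 1 \ge 0$, which holds whenever $n\ge 2$ (as $q\ge 2$), and for $n=1$ the claim reads $(q+1)\Gamma(w,1)\ge\vert F_p(w,1)\vert$, true since $\vert F_p(w,1)\vert\le q\le q+1\le (q+1)\Gamma(w,1)$. A slightly more careful accounting of the last step (the recursion bottoms out as soon as the index hits $2$, $1$, or $0$, so one can count the number of genuine applications of Proposition \ref{gt569wp} as $\lfloor (n-1)/2\rfloor$ rather than $\lceil n/2\rceil$) tightens things further, but the crude count already suffices.

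I would carry it out as follows. First, state and prove by induction on $n$ the inequality $\vert F_p(w,n)\vert \le (q+1)n\,\Gamma(w,n)$: the base cases $n\in\{1,2\}$ are checked directly from $\vert F_p(w,1)\vert\le q$ and $\vert F_p(w,2)\vert\le q$ together with $\Gamma(w,n)\ge 1$. For the inductive step with $n>2$, apply Proposition \ref{gt569wp} and the definition of $\Gamma$ to get $\vert F_p(w,n)\vert \le 2\Gamma(w,n) + \vert F_p(w,n-2)\vert$; apply the inductive hypothesis to $\vert F_p(w,n-2)\vert$, bound $\Gamma(w,n-2)\le\Gamma(w,n)$, and simplify the resulting coefficient $2 + (q+1)(n-2) = (q+1)n - 2q$, which is $\le (q+1)n$ since $q>0$. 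That closes the induction. The main (and only) obstacle is purely bookkeeping — making sure the multiplicative constant stays at $(q+1)$ through the induction and that the base cases are handled by the $\Gamma(w,n)\ge 1$ convention rather than needing their own special constant; there is no combinatorial difficulty beyond what Propositions \ref{gt569wp} and \ref{tu9856rg65k} already provide.
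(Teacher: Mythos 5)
Your proof is correct and takes essentially the same route as the paper: both iterate Proposition \ref{gt569wp} down to length $1$ or $2$, bound each $\vert\gamma(w,j)\vert$ by $\Gamma(w,n)$, and absorb the base term $\leq q$ using $\Gamma(w,n)\geq 1$ (the paper telescopes explicitly to get $n\Gamma(w,n)+q$ and then passes to $(q+1)n\Gamma(w,n)$, while you package the same computation as an induction). The only blemish is your passing claim that $\vert F_p(w,j)\vert\leq 1$ for $j\leq 1$, which should read $\vert F_p(w,1)\vert\leq q$ --- as you yourself correctly use in the actual accounting.
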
 
\begin{proof}
We define two functions $\bar \phi$ and $\phi$ as follows. If $n$ is even then $\bar \phi(n)=2$, otherwise $\bar \phi(n)=1$. Let $\phi(n) = \{2+\bar \phi(n), 4+\bar \phi(n), \dots, n\}$. For example $\phi(8)=\{4,6,8\}$ and $\phi(9)=\{3,5,7,9\}$.

Proposition \ref{gt569wp} states that 
\begin{equation}
\label{eqtn_tu7o881}
2\vert \gamma(w,n) \vert + \vert F_p(w,n-2) \vert \geq \vert F_p(w,n)\vert\mbox{.}
\end{equation} 
It follows that:
\begin{equation}
\label{eqtn_tu7o882}
2\vert \gamma(w,n-2) \vert + \vert F_p(w,n-4) \vert \geq\vert F_p(w,n-2) \vert\mbox{.}
\end{equation}
From (\ref{eqtn_tu7o881}) and (\ref{eqtn_tu7o882}):
\begin{equation}
\label{eqtn_tu7o884}
2\vert \gamma(w,n) \vert + 2\vert \gamma(w,n-2)\vert + F_p(w,n-4) \vert \geq \vert F_p(w,n)\vert\mbox{.}
\end{equation}
In general (\ref{eqtn_tu7o881}) implies that:
\begin{equation}
\label{eqtn_tu7o885}
2\vert \gamma(w,n-i) \vert + \vert F_p(w,n-2i) \vert\geq \vert F_p(w,n-i) \vert\mbox{.}
\end{equation}
Then by iterative applying of (\ref{eqtn_tu7o885}) to (\ref{eqtn_tu7o884}) we obtain that:
\begin{equation}
\label{eqtn_f589dq}
\sum_{j \in \phi(n)} 2\vert \gamma(w,j)\vert + \vert F_p(w,\bar \phi(n))\vert \geq \vert F_p(w,n)\vert
\end{equation}

We have that $\vert F_p(w,\bar \phi(n))\vert\leq q$;  just consider that $\vert F_p(w,\bar \phi(n))\vert$ is the number of palindromes of length $1$ or $2$.
Recall that $\Gamma(w,n)\geq \vert \gamma(w,j)\vert$ for $2<j<n$ and realize that $\vert\phi(n)\vert\leq \lfloor \frac{n}{2}\rfloor$. It follows from (\ref{eqtn_f589dq}) that 
$2\lfloor\frac{n}{2}\rfloor\Gamma(w,n)+q\geq \vert F_p(w,n)\vert$. Since $2\lfloor\frac{n}{2}\rfloor\leq n$ we obtain that 
$n\Gamma(w,n)+q\geq \vert F_p(w,n)\vert$.

It is easy to see that $(q+1)n\Gamma(w,n)\geq n\Gamma(w,n)+q$ for $n>0$. We prefer to use $(q+1)n\Gamma(w,n)$ instead of $n\Gamma(w,n)+q$, because this will be easier to handle in Corollary \ref{kop552q1}, even if it makes the upper bound ``a little bit worse''. This completes the proof.
\end{proof}
We need to cope with the longest proper palindromic suffixes that are ``too long''. We show that if the longest proper palindromic suffix $\lpps(v)$ is longer the half of the length of $v$, then  $v$ contains a ``short'' palindromic factor, that uniquely determines $v$. Some similar results can be found in \cite{Manea2017_Lyndon}.
\begin{lemma}
\label{kod654ih}
Let $u,v\in P^+$ be palindromes such that $u$ is a prefix of $v$ and $\frac{1}{2}\vert v\vert\leq\vert u \vert<\vert v\vert$.
Let $n,k$ be as follows:
\[
n=\begin{cases}
\lceil\frac{\vert v\vert}{2}\rceil & \vert  v\vert \mbox{ is odd}\\
\frac{\vert v\vert}{2} & \vert v\vert \mbox{ is even.}
\end{cases}
\]
\[
k=
\begin{cases}
\lceil\frac{\vert u\vert}{2}\rceil & \vert u\vert\mbox{ is odd}\\
\frac{\vert u\vert}{2}+1 & \vert u\vert\mbox{ is even.}
\end{cases}
\]
Furthermore we define $\bar \rho(u,v)=v[k,n]=v_kv_{k+1}\cdots v_{n-1}v_n$ and we define $\rho(u,v)$ as follows:
\[
\rho(u,v)=
\begin{cases}
v_nv_{n-1}\cdots v_{k+1}v_kv_kv_{k+1}\cdots v_{n-1}v_n & \vert u\vert\mbox{ is even} \\
v_nv_{n-1}\cdots v_{k+1}v_kv_{k+1}\cdots v_{n-1}v_n & \vert u\vert\mbox{ is odd.}
\end{cases}
\]
If $u_1,u_2,v_1,v_2\in P^+$,  $\vert v_1\vert=\vert v_2\vert$, $u_1$ is a prefix of $v_1$, $u_2$ is a prefix of $v_2$,  $\frac{1}{2}\vert v_1\vert\leq\vert u_1 \vert$, $\frac{1}{2}\vert v_2\vert\leq\vert u_2 \vert$, and $\rho(u_1,v_1)=\rho(u_2,v_2)$, then $v_1=v_2$. 
In other words the palindrome $\rho(u,v)$ and the length $\vert v \vert$ uniquely determine the palindrome $v$.
\end{lemma}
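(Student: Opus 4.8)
The plan is to give an explicit reconstruction of $v$ from the single word $\rho(u,v)$ and the length $\vert v\vert$; once that is available, the statement about $v_1,v_2$ is immediate. First I would dispose of the parity bookkeeping. Inspecting the two branches of the definition, $\rho(u,v)$ has length $2(n-k+1)$ when $\vert u\vert$ is even and length $2(n-k)+1$ when $\vert u\vert$ is odd, so $\vert\rho(u,v)\vert\equiv\vert u\vert\pmod 2$; moreover $\bar\rho(u,v)=v[k,n]$ appears in $\rho(u,v)$ as its suffix of length $\vert\rho(u,v)\vert/2$ in the even case and $(\vert\rho(u,v)\vert+1)/2$ in the odd case. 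Since $\vert v\vert$ determines $n=\lceil\vert v\vert/2\rceil$, and then $\vert\rho(u,v)\vert$ together with $n$ determines $k$, I conclude that $\rho(u,v)$ and $\vert v\vert$ determine the parity of $\vert u\vert$, the number $k$, and the word $v[k,n]$. So it remains to recover $v$ from $v[k,n]$, $k$, $n$ and $\vert v\vert$.

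Next I would extract a periodicity from the hypotheses. Writing $v=uz$ and using $v=v^R$ gives $v=z^Ru^R=z^Ru$, so $u$ is a suffix of $v$ as well as a prefix; hence $u$ is a border of $v$, and $v$ has period $p:=\vert v\vert-\vert u\vert$, with $p\le\tfrac12\vert v\vert$ by hypothesis. Separately, the palindromicity of $v$, i.e. $v_i=v_{\vert v\vert+1-i}$, shows that the factor of $v$ occupying positions $[\,\vert v\vert+1-n,\ \vert v\vert+1-k\,]$ is exactly the reversal of $v[k,n]$. Because $n=\lceil\vert v\vert/2\rceil$, the intervals $[\,k,n\,]$ and $[\,\vert v\vert+1-n,\ \vert v\vert+1-k\,]$ join into the single interval $[\,k,\ \vert v\vert+1-k\,]$ — they abut when $\vert v\vert$ is even and overlap in the one position $n$ when $\vert v\vert$ is odd. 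Therefore the central factor $C:=v[k,\vert v\vert+1-k]$ is completely determined by $v[k,n]$ and the parity of $\vert v\vert$, it begins at the known position $k$, and a short computation gives $\vert C\vert=\vert v\vert+2-2k$, which equals $p$ when $\vert u\vert$ is even and $p+1$ when $\vert u\vert$ is odd; in either case $\vert C\vert\ge p$.

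To finish, I would invoke the elementary fact that a word of length $\vert v\vert$ with period $p$ is uniquely determined by $\vert v\vert$ together with any one of its factors of length at least $p$ and that factor's starting position: such a factor covers $p$ consecutive positions, hence one letter in each residue class modulo $p$, which pins down $v_1,\dots,v_p$ and therefore all of $v$. Applying this to $C$ recovers $v$ from $\rho(u,v)$ and $\vert v\vert$. Consequently, if $\rho(u_1,v_1)=\rho(u_2,v_2)$ and $\vert v_1\vert=\vert v_2\vert$, then $\vert u_1\vert$ and $\vert u_2\vert$ have the same parity, the numbers $k$, the words $\bar\rho$, and the central factors $C$ all coincide, so the two reconstructions coincide and $v_1=v_2$.

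The step I expect to cause the most trouble is the index arithmetic in the middle paragraph: one must verify, in each of the four combinations of parities of $\vert u\vert$ and $\vert v\vert$, that $1\le k\le n\le\vert v\vert$ and $2k\le\vert v\vert+1$ (so that $C=v[k,\vert v\vert+1-k]$ is a legitimate nonempty factor), that the two reflected intervals genuinely tile $[\,k,\ \vert v\vert+1-k\,]$ with neither a gap nor an unaccounted-for overlap, and that $\vert C\vert\ge p$; together with identifying the correct suffix of $\rho(u,v)$ that equals $\bar\rho(u,v)$ in the even versus odd case. None of this is deep, but it is precisely where $n=\lceil\vert v\vert/2\rceil$ and the two-branch definitions of $k$ and $\rho$ must be handled with care, and it is the only place $\tfrac12\vert v\vert\le\vert u\vert$ is used.
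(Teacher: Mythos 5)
Your proof is correct, and it takes a recognizably different route from the paper's. The paper argues index by index: for any position $j<k$ it reflects $j$ about the centre of $u$ (using $u=u^R$) and, if the image lands beyond $n$, reflects again about the centre of $v$ (using $v=v^R$), obtaining an index with the same letter that is strictly closer to the centre; iterating pushes every index into $[k,n]$, so $v$ is reconstructible from $v[k,n]$ and $\vert v\vert$. You instead package the same two symmetries into a single structural fact: since the palindromic prefix $u$ of the palindrome $v$ is also a suffix, $v$ has period $p=\vert v\vert-\vert u\vert\leq\frac{1}{2}\vert v\vert$, and the central factor $C=v[k,\vert v\vert+1-k]$ recovered from $\rho(u,v)$ has length $p$ or $p+1$, hence meets every residue class modulo $p$. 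These are literally the same mechanism --- the composition of the two reflections is translation by $p$ --- but your formulation makes the role of the hypothesis $\vert u\vert\geq\frac{1}{2}\vert v\vert$ transparent and replaces the paper's closing ``simple exercise to reconstruct $v$'' with an explicit, standard reconstruction argument. Your bookkeeping checks out: $\vert\rho(u,v)\vert$ has the parity of $\vert u\vert$ and, with $n=\lceil\vert v\vert/2\rceil$, determines $k$ and $\bar\rho(u,v)$; the two reflected intervals tile $[k,\vert v\vert+1-k]$ (abutting for $\vert v\vert$ even, overlapping in the single position $n$ for $\vert v\vert$ odd); $\vert C\vert=\vert v\vert+2-2k$ equals $p$ or $p+1$ in the even/odd cases; and $k\leq n$ follows from $\vert u\vert\leq\vert v\vert-1$. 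One point worth making explicit in a final write-up: from $\rho(u_1,v_1)=\rho(u_2,v_2)$ and $\vert v_1\vert=\vert v_2\vert$ you first get $k_1=k_2$ and a common parity of $\vert u_1\vert,\vert u_2\vert$, hence $\vert u_1\vert=\vert u_2\vert$ and therefore equal periods $p_1=p_2$; this is needed before the concluding ``same length, same period, same central factor at the same position, hence same word'' step.
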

\begin{proof}
Given $n,j$  such that $1\leq j\leq n$, we define $\mirror(n,j)=n-j+1$.

For example: $\mirror(10,3)=8$, $\mirror(10,8)=3$, $\mirror(9,5)=5$.\\ It is easy to see that $\mirror(n,\mirror(n,j))=j$.

If $w$ is a palindrome with $\vert w\vert=t$, then clearly $w[i]=w[\mirror(t,i)]$ for $1\leq i\leq t$.

Given $u,v,k,n$ as described in the lemma. We show that for every $1\leq j< k$, there is $\bar j$ such that $j<\bar j\leq n$ and $v[j]=v[\bar j]$.
If $i=\mirror(\vert u\vert, j)$, then $v[j]=v[i]$, since the prefix $u$ is a palindrome. Clearly $i\geq k$; if $i\leq n$, then we are done: $i=\bar j$. If $i>n$, then let $\bar j=\mirror(\vert v\vert, i)$. Obviously $j<\bar j\leq n$, since $\vert u\vert <\vert v\vert$. Thus we showed that for any $1\leq j< k$ there is an index $\bar j$ which is ``closer to the center of $v$'' and such that $v[j]=[\bar j]$. Repeating the process, we can show that for any index $j$ there is an index $\tilde j$ such that $k\leq \tilde j\leq n$ and $v[j]=v[\tilde j]$. Then it is a simple exercise to reconstruct $v$ from $\rho(u,v)$ and the length $\vert v\vert$; note that from $\vert \rho(u,v)\vert $ you can determine if $\vert u\vert$ is odd or even. This proves the lemma.
\end{proof}
Let us have a look on the next examples that illuminate the proof:
\begin{example}
Let $v=12321232123212321$, $\vert v\vert=17$, $u=1232123212321$, $\vert u\vert=13$.
Then $n=9$, $k=7$, $\bar \rho(u,v)=321$, and $\rho=12321$.

Let $j=2$, then $v[2]=2$, $\mirror(\vert u\vert,2)=\mirror(13,2)=12$, $v[12]=2$, $\bar j= \mirror(\vert v\vert, 12)=\mirror(17,12)=6$, $v[6]=2$.

Let $j=6$, then $v[6]=2$, $\bar j=\mirror(\vert u\vert,6)=\mirror(13,6)=8$, $v[8]=2$.
\end{example}
\begin{example}
Let $v=211221122112$, $\vert v\vert=12$, $u=21122112$, $\vert v\vert=8$. 
Then $n=6$, $k=5$, $\bar \rho(u,v)=21$, and $\rho=1221$.
\end{example}
We derive an upper bound for the number of $u$-switches.
\begin{proposition}
\label{ujw23op}
If $w\in R$ and $n>0$ then
$$\Gamma(w,n)\leq q^5(\lceil\frac{n}{2}\rceil)^2\Gamma(w,\lceil\frac{n}{2}\rceil)\mbox{.}$$
\end{proposition}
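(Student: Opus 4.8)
The plan is to bound $\Gamma(w,n) = \max\{|\gamma(w,i)| : 0 \le i \le n\}$ by relating each $u$-switch of length $i \le n$ to data living inside a palindrome of length roughly $i/2$. First I would observe that it suffices to bound $|\gamma(w,i)|$ for a fixed $i$ with $2 < i \le n$, and that $|\gamma(w,i)| \le \sum_{r} |\Upsilon(w,i-2,r)|$ where $r$ ranges over the distinct values of $\lpps(u)$ as $u$ runs over $\trim(\gamma(w,i))$; by Corollary~\ref{heu586wlp8} each term is at most $q(q-1) \le q^2$, so the whole problem reduces to counting how many distinct longest-proper-palindromic-suffixes $r$ can occur. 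So the real task is: how many palindromes $r$ arise as $\lpps(u)$ for some $u \in \trim(\gamma(w,i))$?

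The key dichotomy (flagged in the introduction) is to split the palindromes $u \in \trim(\gamma(w,i))$ according to whether $|\lpps(u)| \le \tfrac12 |u|$ or $|\lpps(u)| > \tfrac12 |u|$. In the first case, $r = \lpps(u)$ is a palindromic factor of $w$ of length at most $\lceil i/2 \rceil \le \lceil n/2 \rceil$, so the number of such $r$ is at most $\sum_{m \le \lceil n/2 \rceil} |F_p(w,m)| \le \lceil n/2 \rceil \cdot \max_m |F_p(w,m)|$, which by Lemma~\ref{pfw88q} is bounded by $\lceil n/2 \rceil \cdot (q+1)\lceil n/2 \rceil \Gamma(w,\lceil n/2 \rceil)$. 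In the second case, where $\lpps(u)$ is ``too long,'' I would apply Lemma~\ref{kod654ih} with $v = u$ and $u_{\text{prefix}} = \lpps(u)$ (which is indeed a palindromic prefix of $u$ since $u$ is a palindrome and $\lpps(u)$ is a palindromic suffix of $u$, hence $\lpps(u)^R = \lpps(u)$ is a prefix): the palindrome $\rho(\lpps(u), u)$ together with the length $|u|$ uniquely determines $u$, and $\rho(\lpps(u),u)$ is a palindromic factor of $w$ of length at most $|u| - |\lpps(u)| + \text{(small)} \le$ something like $\lceil n/2 \rceil$. Hence each such $u$ is determined by a palindrome of length $\le \lceil n/2 \rceil$, giving at most $\lceil n/2 \rceil \cdot (q+1)\lceil n/2 \rceil \Gamma(w,\lceil n/2 \rceil)$ choices; and since $u$ determines its own $\lpps(u) = r$, this also bounds the number of distinct $r$ coming from long-suffix palindromes.

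Combining the two cases, the number of distinct $r$ is at most roughly $2 \cdot (q+1) \lceil n/2 \rceil^2 \Gamma(w,\lceil n/2 \rceil)$, and multiplying by the $q^2$ bound on each $|\Upsilon(w,i-2,r)|$ gives $|\gamma(w,i)| \le 2q^2(q+1)\lceil n/2 \rceil^2 \Gamma(w,\lceil n/2 \rceil) \le q^5 \lceil n/2 \rceil^2 \Gamma(w,\lceil n/2 \rceil)$ (absorbing $2q^2(q+1) \le q^5$ for $q \ge 2$); taking the max over $i \le n$ and using that $\Gamma$ is monotone in its second argument (with $\Gamma \ge 1$) handles the small cases $i \le 2$ as well. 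The main obstacle I anticipate is bookkeeping the exact lengths: verifying that in the long-suffix case the palindrome $\rho(\lpps(u),u)$ really has length $\le \lceil n/2 \rceil$ (the length of $\rho$ is $2n_{\text{mid}} - 2k + 1$ or $2n_{\text{mid}} - 2k + 2$ in the notation of Lemma~\ref{kod654ih}, and one must check this is $\le \lceil |u|/2 \rceil$ when $|\lpps(u)| \ge |u|/2$), and confirming that the constant really collapses into $q^5$ rather than something larger. A secondary subtlety is making sure the ``determined by a short palindrome'' argument in the long case counts $r$-values and not merely $u$-values — but since the map $u \mapsto \lpps(u)$ is a function, an injection from long-suffix $u$'s into short palindromes automatically bounds the image of $\lpps$ on that set.
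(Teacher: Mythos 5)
Your strategy coincides with the paper's: split the middles $u$ of the switches according to whether $\vert\lpps(u)\vert\geq\frac{1}{2}\vert u\vert$, handle the long-suffix case with Lemma~\ref{kod654ih}, handle the short-suffix case by noting that $r=\lpps(u)$ is itself a palindromic factor of length at most $\lceil\frac{n}{2}\rceil$, control multiplicities via Proposition~\ref{tu9856rg65k}/Corollary~\ref{heu586wlp8}, and convert counts of short palindromic factors into $\Gamma(w,\lceil\frac{n}{2}\rceil)$ via Lemma~\ref{pfw88q}. The only organizational difference is that you count distinct values of $r=\lpps(u)$ and multiply by a per-$r$ multiplicity, whereas the paper counts middles and multiplies by a per-middle multiplicity; both fit under $q^5$.

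However, one step is false as written: the inequality $\vert\gamma(w,i)\vert\leq\sum_{r}\vert\Upsilon(w,i-2,r)\vert$. The right-hand side equals $\vert\trim(\gamma(w,i))\vert$, and $\trim$ is in general many-to-one on $\gamma(w,i)$ (a single middle $u$ can support up to $q(q-1)$ switches $aub$), so the inequality points the wrong way; the paper itself uses $\vert\gamma(w,n)\vert\leq q(q-1)\vert\trim(\gamma(w,n))\vert$. Note also that Corollary~\ref{heu586wlp8} bounds the number of \emph{middles} sharing a given $\lpps$, not the number of \emph{switches}. If you repair the step in the obvious way, by inserting the missing factor $q(q-1)$ for the fibres of $\trim$, your constant becomes $2q^2(q-1)^2(q+1)\approx 2q^5$, which already exceeds $q^5$ at $q=3$ ($288>243$), so the repair cannot be waved away. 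The correct fix is to apply Proposition~\ref{tu9856rg65k} to the switches directly: for each ordered pair $(a,b)$ with $a\neq b$ and each palindrome $r$ there is at most one switch $aub\in\gamma(w,i)$ whose middle $u$ satisfies $\lpps(u)=r$, hence $\vert\gamma(w,i)\vert\leq q(q-1)\cdot(\mbox{number of distinct }r)$, which is exactly the bound your final computation uses and which keeps the constant at $2q^2(q+1)\leq q^5$ for $q\geq 2$. With that single substitution your argument goes through; the remaining points you flag (the length bound $\vert\rho(\lpps(u),u)\vert\leq\lceil\frac{\vert u\vert}{2}\rceil$ and the passage from counting long-suffix $u$'s to counting their $\lpps$-values) check out as you anticipate.
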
 
\begin{proof}
If a word $w$ has a palindromic factor $v$, where $v$ has a palindromic suffix $u$ with $\frac{1}{2}\vert v\vert\leq\vert u \vert<\vert v\vert$ then the longest proper palindromic suffix $\lpps(v)$ is longer than the half of $v$; formally $\vert \lpps(u)\vert\geq \frac{1}{2}\vert v\vert$. In such a case the $\rho(u,v)$ is defined and $\vert\rho(u,v)\vert\leq \frac{1}{2}\vert v\vert$. This implies that we have a ``short'' palindrome $\rho(u,v)$ or $\lpps(v)$ that uniquely identifies at most $q(q-1)$ distinct palindromes $v$ of length $n$. This means that we need only to take into account palindromic factors of length $\leq \lceil \frac{n}{2}\rceil$. Let us express this idea formally.

We partition $\trim(\gamma(w,n))$ into sets  $\Delta_{\rho}(w,n), \Delta_{\lpps}(w,n)$ as follows. Let $v\in \trim(\gamma(w,n))$. If $\frac{1}{2}\vert v\vert\leq \vert \lpps(v)\vert$ then $v\in \Delta_{\rho}(w,n)$ otherwise $v\in \Delta_{\lpps}(w,n)$. Obviously 
$\trim(\gamma(w,n)) = \Delta_{\rho}(w,n) \cup \Delta_{\lpps}(w,n)$ and $\Delta_{\rho}(w,n) \cap \Delta_{\lpps}(w,n)=\emptyset$. 
Let us investigate the size of $\Delta_{\rho}(w,n)$ and $\Delta_{\lpps}(w,n)$. 
\begin{itemize}
\item
$\rho(u,v)$ and $\vert v\vert$ uniquely determine the palindrome $v$; see Lemma \ref{kod654ih}. In addition, note that $\vert \rho(u,v)\vert \leq \lceil\frac{\vert v\vert}{2}\rceil$. Hence the number of all palindromic factors of $w$ of length $\leq \lceil\frac{n}{2}\rceil$ must be bigger or equal to the size of $\Delta_{\rho}(w,n)$:
\begin{equation}
\label{eqtn_ty540dm}
\vert  \Delta_{\rho}(w,n)\vert \leq \sum_{j=1}^{\lceil\frac{n}{2}\rceil}\vert F_p(w,j)\vert\mbox{.}
\end{equation} 
\item
$\lpps(v)$ identifies at most $q(q-1)$ distinct palindromic factors of $w$; see Corollary \ref{heu586wlp8}. It follows from the definition of $\Delta_{\lpps}(w,n)$ that  $\vert \lpps(v)\vert<\frac{1}{2}\vert v\vert$ for $v\in \Delta_{\lpps}(w,n)$. Hence the number of all palindromic factors of $w$ of length $\leq \lceil\frac{n}{2}\rceil$ multiplied by $q(q-1)$ must be bigger or equal to the size of $\Delta_{\lpps}(w,n)$:
\begin{equation}
\label{eqtn_ty541dm}
\vert \Delta_{\lpps}\vert \leq q(q-1) \sum_{j=1}^{\lceil\frac{n}{2}\rceil}\vert F_p(w,j)\vert\mbox{.}
\end{equation}
\end{itemize}
Actually the sets $\Delta_{\rho}(w,n)$ and $\Delta_{\lpps}(w,n)$ contain palindromes of length $n-2$, thus it would be sufficient to sum up to the length $\lceil\frac{n-2}{2}\rceil$ instead of $\lceil\frac{n}{2}\rceil$, but again in Corollary \ref{kop552q1} it will be more comfortable to handle $\lceil\frac{n}{2}\rceil$.

It is easy to see that $\vert\gamma(w,n)\vert\leq q(q-1)\vert \trim(\gamma(w,n)) \vert$, because for every $u\in \trim(\gamma(w,n))$ and $a,b\in A$, where $a\not =b$, there can be $aub\in \gamma(w,n)$). It follows that:
\begin{equation}
\label{eqtn_ty54dm}
\vert\gamma(w,n)\vert\leq q(q-1)\vert \trim(\gamma(w,n))\vert=q(q-1)(\vert  \Delta_{\rho}(w,n)\vert + \vert \Delta_{\lpps}(w,n)\vert)
\end{equation}
Then it follows from (\ref{eqtn_ty540dm}), (\ref{eqtn_ty541dm}) and (\ref{eqtn_ty54dm}) that 
\begin{equation}
\label{eqtn_op3411je}
\vert\gamma(w,n)\vert\leq q(q-1)(q(q-1)+1)\sum_{j=1}^{\lceil\frac{n}{2}\rceil}\vert F_p(w,j)\vert
\end{equation}
From Lemma \ref{pfw88q} we know that $\vert F_p(w,j)\vert \leq (q+1)j\Gamma(w,j) $. Therefore we have that:
\begin{equation}
\label{eqtn_hu56qpd1}
\sum_{j=1}^{\lceil\frac{n}{2}\rceil}\vert F_p(w,j)\vert\leq \sum_{j=1}^{\lceil\frac{n}{2}\rceil}(q+1)j\Gamma(w,j)\leq \lceil\frac{n}{2}\rceil (q+1)\lceil\frac{n}{2}\rceil\Gamma(w,\lceil\frac{n}{2}\rceil)
\end{equation}
From (\ref{eqtn_op3411je}) and (\ref{eqtn_hu56qpd1}):
\begin{equation}
\label{eqtn_hu57qpd1}
\vert\gamma(w,n)\vert\leq q(q-1)(q(q-1)+1)(q+1)(\lceil\frac{n}{2}\rceil)^2\Gamma(w,\lceil\frac{n}{2}\rceil)\mbox{.}
\end{equation}
To simplify (\ref{eqtn_hu57qpd1}) we apply that:
\begin{equation}
\label{eqtn_hu58qpd1}
q(q-1)(q(q-1)+1)(q+1)=q(q^2-1)(q^2-q+1)<q^5\mbox{.}
\end{equation} 
As a result  we have from (\ref{eqtn_hu57qpd1}) and (\ref{eqtn_hu58qpd1}): $\vert\gamma(w,n)\vert\leq q^5(\lceil\frac{n}{2}\rceil)^2\Gamma(w,\lceil\frac{n}{2}\rceil)$, which implies the proposition: 
$$\Gamma(w,n)=\max\{1,\max\{\vert\gamma(w,j)\vert \mid 0\leq j\leq n\}\} \leq q^5(\lceil\frac{n}{2}\rceil)^2\Gamma(w,\lceil\frac{n}{2}\rceil)\mbox{.}$$
\end{proof} 
We will need the following lemma in the proof of Corollary \ref{kop552q1}. 
\begin{lemma}
\label{lmtu85w1rt}
$\prod_{j\geq 1}^k\lceil\frac{n}{2^j}\rceil\leq (2\sqrt{n})^{\log_2{n}}$, where $k=\lfloor \log_2{n}\rfloor$.
\end{lemma}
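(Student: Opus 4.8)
The plan is to replace each ceiling by a clean power-of-two upper bound, take the product so that the powers of two telescope, and then compare exponents after taking logarithms. First I would note that for every $j$ with $1\le j\le k=\lfloor\log_2 n\rfloor$ we have $2^j\le 2^k\le 2^{\log_2 n}=n$, so $n/2^j\ge 1$ and hence
\[
\lceil n/2^j\rceil \;\le\; \frac{n}{2^j}+1 \;\le\; \frac{2n}{2^j} \;=\; \frac{n}{2^{j-1}}.
\]
Multiplying these inequalities over $j=1,\dots,k$ telescopes the denominators:
\[
\prod_{j=1}^{k}\lceil n/2^j\rceil \;\le\; \prod_{j=1}^{k}\frac{n}{2^{j-1}} \;=\; \frac{n^{k}}{2^{k(k-1)/2}}.
\]
(For $n=1$ the product is empty and both sides equal $1$, so we may assume $n\ge 2$, i.e.\ $k\ge 1$.)

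Next I would pass to logarithms base $2$. Writing $L=\log_2 n$, so that $k\le L$, and using $(2\sqrt n)^{\log_2 n}=2^{L(1+L/2)}$, it suffices to prove
\[
kL-\frac{k(k-1)}{2}\;\le\;L+\frac{L^2}{2}.
\]
Regarded as a function of $k$, the left-hand side equals $-\tfrac12 k^2+\bigl(L+\tfrac12\bigr)k$, a downward parabola with vertex at $k=L+\tfrac12$. Since $k\le L<L+\tfrac12$, this function is increasing on the interval $[0,k]$, hence is bounded above by its value at $k=L$, namely $L^2-\tfrac{L(L-1)}{2}=\tfrac{L^2}{2}+\tfrac{L}{2}$. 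Finally $\tfrac{L^2}{2}+\tfrac{L}{2}\le\tfrac{L^2}{2}+L$, which is exactly the claimed inequality, and the lemma follows.

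The only mildly delicate point is this last comparison of exponents. Bounding $k$ by $L$ in the term $kL$ while simply discarding the negative quadratic term $-k(k-1)/2$ would give the weaker estimate $n^{k}2^{-k(k-1)/2}\le n^{L}$, which falls short of $(2\sqrt n)^{\log_2 n}=n\cdot n^{(\log_2 n)/2}$. The saving observation is that the quadratic in $k$ is monotone on the range $k\le L$, so its maximum over that range is attained at the endpoint $k=L$, where the term $-k(k-1)/2$ supplies precisely the extra factor $2^{-L^2/2}$ needed to absorb $(\sqrt n)^{\log_2 n}$. Everything else in the argument is elementary.
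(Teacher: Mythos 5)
Your proof is correct and follows essentially the same route as the paper: the key step in both is the bound $\lceil n/2^j\rceil\leq (n+2^j)/2^j\leq 2n/2^j$ followed by telescoping the powers of two, giving the identical intermediate quantity $n^k/2^{k(k-1)/2}=(2n)^k/2^{k(k+1)/2}$. The only difference is cosmetic: the paper finishes by using $2^{k+1}\geq n$ to write this as $(2\sqrt{n})^k\leq(2\sqrt{n})^{\log_2 n}$, whereas you compare exponents via the monotonicity of a quadratic in $k$; both are valid.
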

\begin{proof}
\begin{multline*}
\prod_{j\geq 1}^k\lceil\frac{n}{2^j}\rceil=\lceil\frac{n}{2}\rceil\lceil\frac{n}{4}\rceil\lceil\frac{n}{8}\rceil \cdots \lceil\frac{n}{2^{k-1}}\rceil \lceil\frac{n}{2^{k}}\rceil\leq \\
(\frac{n}{2}+1)(\frac{n}{4}+1)(\frac{n}{8}+1)\cdots (\frac{n}{2^{k-1}}+1)(\frac{n}{2^{k}}+1)=\\
(\frac{n+2}{2})(\frac{n+4}{4})(\frac{n+8}{8})\cdots (\frac{n+2^{k-1}}{2^{k-1}})(\frac{n+2^{k}}{2^{k}})=\frac{\prod_{j=1}^k(n+2^j)}{\prod_{j=1}^k2^j}\mbox{.}
\end{multline*}
Hence we have:
\begin{equation}
\label{eqpo56e001}
\prod_{j\geq 1}^k\lceil\frac{n}{2^j}\rceil\leq\frac{\prod_{j=1}^k(n+2^j)}{\prod_{j=1}^k2^j}\mbox{.}
\end{equation}
Next we investigate the both products on the right side of (\ref{eqpo56e001}):
\begin{equation}
\label{eqpo56e002}
\prod_{j=1}^k(n+2^j)=(n+2)(n+4)(n+8)\cdots (n+2^{k-1})(n+2^k)\leq (2n)^k\mbox{.}
\end{equation}
Note that $n+2^j\leq 2n$, where $j\leq k$.
\begin{equation}
\label{eqpo56e003}
\prod_{j=1}^k2^j=22^22^3\cdots 2^{k-1}2^{k}=2^{\sum_{j=1}^kj}=2^{\frac{k(k+1)}{2}}\mbox{.}
\end{equation}

Then from (\ref{eqpo56e001}), (\ref{eqpo56e002}) and (\ref{eqpo56e003}):
$\prod_{j\geq 1}^k \lceil\frac{n}{2^j}\rceil\leq\frac{\prod_{j=1}^k(n+2^j)}{\prod_{j=1}^k2^j}\leq \frac{(2n)^k}{2^{\frac{k(k+1)}{2}}}=\left(\frac{2n}{2^{\frac{(k+1)}{2}}}\right)^k$.

Since $2^{k+1}\geq n$:

$\left(\frac{2n}{2^{\frac{(k+1)}{2}}}\right)^k\leq\left(\frac{2n}{n^{\frac{1}{2}}}\right)^k=(2n^{\frac{1}{2}})^k\leq (2\sqrt{n})^{\log_2{n}}$.
This completes the proof.
\end{proof}
Based on Proposition \ref{ujw23op} we will derive a non-recurrent upper bound for $\Gamma(w,n)$.
\begin{corollary}
\label{kop552q1}
If $w\in R$ and $n>0$ then
$$\Gamma(w,n)\leq (4q^{10}n)^{\log_2{n}}\mbox{.}$$
\end{corollary}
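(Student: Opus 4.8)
The plan is to iterate Proposition \ref{ujw23op} exactly $k=\lfloor\log_2 n\rfloor$ times and then absorb the constants and the product of ceilings into the claimed bound. First I would set $n_0=n$ and $n_{i+1}=\lceil n_i/2\rceil$, so that after $k$ halvings we reach $n_k=1$ (or a small constant), at which point $\Gamma(w,n_k)\leq 1$ by definition, or more crudely we just stop once $n_i\le 2$ and use $\Gamma(w,n_i)\le 1$ together with the fact that $\gamma(w,j)=\emptyset$ for $j\le 2$. Applying Proposition \ref{ujw23op} repeatedly gives
\[
\Gamma(w,n)\leq q^5 n_1^2\,\Gamma(w,n_1)\leq q^5 n_1^2\, q^5 n_2^2\,\Gamma(w,n_2)\leq\cdots\leq \Bigl(q^5\Bigr)^{k}\Bigl(\prod_{i=1}^{k} n_i^2\Bigr)\,\Gamma(w,n_k)\mbox{,}
\]
where $n_i\le\lceil n/2^i\rceil$ (one checks by induction that $\lceil\lceil x\rceil/2\rceil=\lceil x/2\rceil$-type identities keep $n_i$ bounded by $\lceil n/2^i\rceil$). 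Since $\Gamma(w,n_k)\le 1$, this yields $\Gamma(w,n)\le q^{5k}\bigl(\prod_{i=1}^k\lceil n/2^i\rceil\bigr)^2$.

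Next I would invoke Lemma \ref{lmtu85w1rt}, which bounds $\prod_{i=1}^k\lceil n/2^i\rceil\le (2\sqrt n)^{\log_2 n}$, so that $\bigl(\prod_{i=1}^k\lceil n/2^i\rceil\bigr)^2\le (2\sqrt n)^{2\log_2 n}=(4n)^{\log_2 n}$. Combining with the factor $q^{5k}$ and using $k=\lfloor\log_2 n\rfloor\le\log_2 n$, we get
\[
\Gamma(w,n)\leq q^{5\log_2 n}(4n)^{\log_2 n}=\bigl(q^5\cdot 4n\bigr)^{\log_2 n}=(4q^5 n)^{\log_2 n}\leq (4q^{10}n)^{\log_2 n}\mbox{,}
\]
which is the claimed inequality (with room to spare, since $q^5\le q^{10}$). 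The last inflation from $q^5$ to $q^{10}$ is presumably there to make a later estimate cleaner, so I would keep it.

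The main technical obstacle is bookkeeping around the ceiling function: one must verify that the recursion $n_{i+1}=\lceil n_i/2\rceil$ satisfies $n_i\le\lceil n/2^i\rceil$ so that Lemma \ref{lmtu85w1rt} applies verbatim, and that the iteration genuinely terminates after $\lfloor\log_2 n\rfloor$ steps at a value where $\Gamma=1$; for very small $n$ (say $n=1,2$) one should check the base case directly, noting $\gamma(w,n)=\emptyset$ there so $\Gamma(w,n)=1=(4q^{10}n)^{0}$ when $n=1$ and the bound is trivial for $n=2$. A minor subtlety is that Proposition \ref{ujw23op} is stated for all $n>0$, so each application is legitimate regardless of parity; the only care needed is that $\lceil n/2\rceil<n$ for $n>1$, guaranteeing strict decrease and hence termination. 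Everything else is the routine constant-chasing sketched above.
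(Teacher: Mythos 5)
Your proposal is correct and follows essentially the same route as the paper: iterate Proposition \ref{ujw23op} down to $\Gamma(w,2)=1$ using the identity $\lceil\lceil m\rceil/2\rceil=\lceil m/2\rceil$, then apply Lemma \ref{lmtu85w1rt}. Your bookkeeping is in fact slightly sharper (you accumulate $q^{5k}$ where the paper's displayed product amounts to $q^{10k}$), but both land on the stated bound $(4q^{10}n)^{\log_2 n}$.
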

\begin{proof}
Proposition \ref{ujw23op} states that 
\begin{equation}
\label{eqtn_y23hj_001}
\Gamma(w,n)\leq q^5(\lceil\frac{n}{2}\rceil)^2\Gamma(w,\lceil\frac{n}{2}\rceil)\mbox{.}
\end{equation} 
The inequality (\ref{eqtn_y23hj_001}) implies that 
\begin{equation}
\label{eqtn_y23hj_002}
\Gamma(w,\lceil\frac{n}{j}\rceil)\leq q^5(\lceil\frac{n}{2j}\rceil)^2\Gamma(w,\lceil\frac{n}{2j}\rceil)\mbox{.}
\end{equation}
From (\ref{eqtn_y23hj_001}) and (\ref{eqtn_y23hj_002}):
\begin{equation}
\label{eqtn_y23hj_005}
\begin{split}
\Gamma(w,n)\leq q^5(\lceil\frac{n}{2}\rceil)^2 \Gamma(w,\lceil\frac{n}{2}\rceil) \leq q^5(\lceil\frac{n}{2}\rceil)^2q^5 (\lceil\frac{n}{4}\rceil)^2 \Gamma(w,\lceil\frac{n}{4}\rceil)\leq \\ 
q^5(\lceil\frac{n}{2}\rceil)^2q^5 (\lceil\frac{n}{4}\rceil)^2q^5 (\lceil\frac{n}{8}\rceil)^2\Gamma(w,\lceil\frac{n}{8}\rceil)\leq \dots \leq \\
\left(\prod_{j\geq 1}^{\lfloor\log_2{n}\rfloor}q^5\lceil\frac{n}{2^j}\rceil\right)^2\Gamma(w,2)\mbox{.}
\end{split}
\end{equation}
Recall that $\lceil\frac{\lceil m\rceil }{2}\rceil=\lceil\frac{ m }{2}\rceil$, where $m\geq 0$ is a real constant (see \cite{KNUTH_CM_FOUNDATION} in chapter $3.2$ Floor/ceiling applications) and  
realize that $\frac{n}{2^{\lfloor\log_2{n}\rfloor}}\geq 1$ and $\frac{n}{2^{\lceil\log_2{n}\rceil}}\leq 1$. 
Knowing that $\Gamma(w,2)=1$ and 
using Lemma \ref{lmtu85w1rt} we obtain from (\ref{eqtn_y23hj_005}): $$\Gamma(w,n)\leq \left((q^{5}2\sqrt{n})^{\log_2{n}}\right)^2\Gamma(w,2)=(4q^{10}n)^{\log_2{n}}\mbox{.}$$ This ends the proof.
\end{proof}
From Lemma \ref{pfw88q} and Corollary \ref{kop552q1} it follows easily:
\begin{corollary}
\label{kop558g4}
$\vert F_p(w,n)\vert\leq (q+1)n(4q^{10}n)^{\log_2{n}}$ where $w\in R$ and $n>0$.
\end{corollary}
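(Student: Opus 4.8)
The plan is to simply chain together the two estimates already established. Lemma \ref{pfw88q} gives, for every $w\in R$ and every $n>0$, the bound
\[
\vert F_p(w,n)\vert \leq (q+1)n\,\Gamma(w,n)\mbox{,}
\]
so the only thing left to do is to replace the recursively-defined quantity $\Gamma(w,n)$ by the closed-form bound supplied by Corollary \ref{kop552q1}, namely $\Gamma(w,n)\leq (4q^{10}n)^{\log_2 n}$, which also holds for all $w\in R$ and $n>0$. Substituting this into the previous inequality yields
\[
\vert F_p(w,n)\vert \leq (q+1)n\,(4q^{10}n)^{\log_2 n}\mbox{,}
\]
which is exactly the claimed statement.

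First I would note that the hypotheses of the two cited results match the hypotheses of the corollary: both Lemma \ref{pfw88q} and Corollary \ref{kop552q1} assume $w\in R$ and $n>0$, so no case distinction or boundary check is needed. Then I would write out the one-line substitution above. It is worth remarking, for the reader, that all of the genuine combinatorial content — the analysis of $u$-switches via complete returns (Lemma \ref{uk5855d}, Proposition \ref{gt569wp}), the uniqueness of a palindrome from its longest proper palindromic suffix plus two letters (Proposition \ref{tu9856rg65k}, Corollary \ref{heu586wlp8}), the halving trick for long palindromic suffixes (Lemma \ref{kod654ih}, Proposition \ref{ujw23op}), and the product estimate (Lemma \ref{lmtu85w1rt}) — has already been absorbed into Lemma \ref{pfw88q} and Corollary \ref{kop552q1}; the corollary here is purely a packaging step.

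There is essentially no obstacle: the proof is a direct composition of two inequalities, so the ``hard part'' is only cosmetic, namely deciding whether to present the bound in this form or to simplify the constant $4q^{10}$ further (for instance to absorb the factor $(q+1)n$ into a slightly larger base, as is done elsewhere in the paper). I would keep the stated form, since it is the cleanest consequence of the two preceding results and is precisely what is needed for the factor-complexity bounds in Section 3.
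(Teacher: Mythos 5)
Your proposal is correct and is exactly the paper's argument: the corollary is stated as an immediate consequence of Lemma \ref{pfw88q} and Corollary \ref{kop552q1}, obtained by substituting the bound $\Gamma(w,n)\leq (4q^{10}n)^{\log_2 n}$ into $\vert F_p(w,n)\vert\leq (q+1)n\,\Gamma(w,n)$, with matching hypotheses $w\in R$ and $n>0$. Nothing further is needed.
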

We can simply apply the upper bound for the palindromic complexity to construct an upper bound for the factor complexity:
\begin{corollary}
\label{fhyu58wed}
$\vert F(w,n)\vert\leq (q+1)^2n^4(4q^{10}n)^{2\log_2{n}}$ where $w\in R$ and $n>0$.
\end{corollary}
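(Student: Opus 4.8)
The plan is to reduce the factor count to the palindrome count via the rigidity property of rich words already exploited in Proposition \ref{tu9856rg65k}: a factor $r$ of a rich word $w$ is uniquely determined by the pair $(\lpp(r),\lps(r))$ \cite{BuLuGlZa2}. For $n\ge 1$ every $r\in F(w,n)$ has a nonempty longest palindromic prefix and a nonempty longest palindromic suffix, each of length at most $n$ and each lying in $F_p(w)$. Hence the map $r\mapsto(\lpp(r),\lps(r))$ injects $F(w,n)$ into $\bigl(\bigcup_{j=1}^{n}F_p(w,j)\bigr)^2$, which gives the first step
\[
\vert F(w,n)\vert\ \le\ \Bigl(\sum_{j=1}^{n}\vert F_p(w,j)\vert\Bigr)^{2}.
\]

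Next I would substitute the palindromic-complexity estimate of Corollary \ref{kop558g4}, namely $\vert F_p(w,j)\vert\le (q+1)j(4q^{10}j)^{\log_2 j}$ for $j>0$. To collapse the sum it suffices to know that the right-hand side is nondecreasing in $j$ for $j\ge 1$; this is immediate since $\log_2(4q^{10}j)\ge 0$, the factor $(q+1)j$ is increasing, and $j^{\log_2 j}=\exp\!\bigl((\ln j)^2/\ln 2\bigr)$ is nondecreasing. Bounding each of the at most $n$ terms by its value at $j=n$ yields $\sum_{j=1}^{n}\vert F_p(w,j)\vert\le (q+1)n^{2}(4q^{10}n)^{\log_2 n}$.

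Squaring this and combining with the first step gives exactly
\[
\vert F(w,n)\vert\ \le\ (q+1)^{2} n^{4} (4q^{10}n)^{2\log_2 n}.
\]
The argument is routine once Corollary \ref{kop558g4} is in hand; the only points needing minor care are the monotonicity observation used to collapse the sum and the degenerate case $n=1$, where $\log_2 1=0$ makes every exponential factor equal to $1$ and the claimed bound reduces to $(q+1)^2\ge q\ge\vert F(w,1)\vert$. I do not anticipate a genuine obstacle.
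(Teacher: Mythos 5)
Your proposal is correct and follows essentially the same route as the paper: inject $F(w,n)$ into pairs $(\lpp(r),\lps(r))$ of palindromic factors of length at most $n$, bound the number of such palindromes by $n$ times the bound of Corollary \ref{kop558g4} at $j=n$, and square. The only difference is cosmetic — you justify the monotonicity needed to collapse the sum explicitly, whereas the paper passes through $\hat F_p(w,n)=\max_j\vert F_p(w,j)\vert$ and leaves that step implicit.
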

\begin{proof}
We apply again the property of rich words that every factor is determined by its longest palindromic prefix and its longest palindromic suffix \cite{BuLuGlZa2}.
If there are at most $t$ palindromic factors in $w$ of length $\leq n$, then clearly there can be at most $t^2$ different factors of length $n$.
Let $\hat F_p(w,k)=\max\{\vert F_p(w,j)\vert \mid 0\leq j\leq k\}$. From Lemma \ref{kop558g4} we can deduce that $t\leq \sum_{i=1}^n \vert F_p(w,i)\vert \leq  n\hat F_p(w,n)\leq n(q+1)n(4q^{10}n)^{\log_2{n}}$. The lemma follows.
\end{proof}

\section{Rich words closed under reversal}
We can improve our upper bound for the factor complexity if we use the inequality (\ref{rju589w65f5n2m6a}). This inequality was shown for infinite words whose set of factors is closed under reversal. The next proposition and corollary generalize the existing proof for every word $w\in A^{+}\cup A^{\infty}$ with $F(w,n+1)$ closed under reversal.

\begin{proposition}
\label{lhey544wp}
If $w\in A^+$, $F(w,n+1)$ is closed under reversal, $\vert w\vert\geq n+1$, and $n>0$ then 
\begin{equation}\label{g6d58ww5d65h4s}\vert F_p(w,n)\vert +\vert F_p(w,n+1)\vert \leq \vert F(w,n+1)\vert-\vert F(w,n)\vert+2\mbox{.}\end{equation}
\end{proposition}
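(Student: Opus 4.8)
The plan is to recast the inequality as a statement about the Rauzy graph $G$ of $w$ of order $n$ and to deduce it from connectivity of $G$ together with the symmetry induced by reversal. Let $G$ be the directed graph with vertex set $F(w,n)$ having, for each $e\in F(w,n+1)$, one edge directed from the length-$n$ prefix of $e$ to the length-$n$ suffix of $e$. Since $\vert w\vert\geq n+1$, every occurrence of a length-$n$ factor of $w$ extends by a letter on at least one side, so reading $w$ from left to right traces a walk in $G$ that meets every vertex and uses every edge; in particular the undirected graph underlying $G$ is connected. The same remark shows that $F(w,n)$ equals the set of length-$n$ prefixes of elements of $F(w,n+1)$ together with their reversals, so $F(w,n)$ is closed under reversal as well, and hence $\sigma\colon x\mapsto x^{R}$ is a well-defined involutive automorphism of the undirected graph underlying $G$, with fixed vertex set $F_p(w,n)$ and fixed edge set $F_p(w,n+1)$. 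A one-line computation shows that a palindromic edge $e$ (of length $n+1$) joins its length-$n$ prefix $u$ to $u^{R}$.

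Next I would do the orbit bookkeeping. Put $p=\vert F_p(w,n)\vert$ and $p'=\vert F_p(w,n+1)\vert$. Since $\sigma$ is an involution, the vertices not fixed by $\sigma$ and the edges not fixed by $\sigma$ fall into orbits of size $2$, so $\vert F(w,n)\vert=p+2a$ and $\vert F(w,n+1)\vert=p'+2b$, where $a$ counts the pairs $\{x,x^{R}\}$ with $x\in F(w,n)$ and $x\neq x^{R}$, and $b$ the analogous pairs in $F(w,n+1)$. A short rearrangement turns the asserted inequality $\vert F_p(w,n)\vert+\vert F_p(w,n+1)\vert\leq\vert F(w,n+1)\vert-\vert F(w,n)\vert+2$ into the equivalent claim $p+a\leq b+1$.

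To prove $p+a\leq b+1$, pass to the quotient graph $H=G/\sigma$, whose vertex set is the set of $p+a$ reversal-orbits of length-$n$ factors and which has, for each of the $p'+b$ reversal-orbits of length-$(n+1)$ factors, one edge joining the orbit of the prefix to the orbit of the suffix of any representative. A quotient of a connected graph is connected, so $H$ has a spanning tree; this tree has exactly $(p+a)-1$ edges and, being a tree, contains no loop. Now each reversal-orbit of a palindromic edge is a loop of $H$, because such an edge joins $u$ to $u^{R}$ and hence joins an orbit of vertices to itself; likewise an orbit $\{e,e^{R}\}$ of non-palindromic edges whose prefix-orbit and suffix-orbit coincide is a loop. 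Therefore all $(p+a)-1$ tree edges are orbits of the form $\{e,e^{R}\}$ with $e\notin F_p(w,n+1)$; since there are only $b$ such orbits, $b\geq(p+a)-1$. Unwinding the equivalence of the previous paragraph gives the proposition.

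The orbit count and the algebraic rearrangement are routine; the only place where the finite-word hypothesis is used is the connectivity of $G$ obtained from the left-to-right reading of $w$, and the one delicate point is the observation that palindromic edges become loops in the quotient and so cannot help connect $H$ — without it the spanning-tree bound would yield only the weaker $b\geq(p+a)-1-p'$. No balance or Eulerian property of $G$ is needed, so the argument applies uniformly to finite and infinite words.
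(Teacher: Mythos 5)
Your proof is correct, but it takes a genuinely different route from the paper. The paper does not reprove the Bal\'a\v{z}i--Mas\'akov\'a--Pelantov\'a inequality from scratch: it observes that the cited proof goes through whenever the Rauzy graph is strongly connected and $F(w,n+1)$ is closed under reversal, and then, since a finite word need not have a strongly connected Rauzy graph, it passes to $\tilde w=wxw^{R}$ over the extended alphabet $A\cup\{x\}$, whose Rauzy graph is strongly connected, and carefully books the extra $n$ factors, $n+1$ factors and the single extra palindrome $uxu^{R}$ to transfer the inequality back to $w$. You instead give a self-contained argument on the Rauzy graph of $w$ itself: the reversal involution $\sigma$ acts on vertices and edges (your observation that $F(w,n)$ is forced to be closed under reversal is the one point that needs, and gets, a justification via prefixes and suffixes of $F(w,n+1)$), the inequality rearranges to $p+a\le b+1$ in your orbit coordinates, and a spanning tree of the quotient $H=G/\sigma$ must avoid the loops created by palindromic edges, which yields exactly that bound. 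Your route buys two things: it avoids the alphabet extension and the attendant factor counting, and it makes explicit that only \emph{undirected} connectivity of the Rauzy graph is needed, not strong connectivity --- which is precisely the gap the paper's $wxw^{R}$ construction is designed to circumvent. The paper's route is shorter on the page because it leans on the published result, at the cost of requiring the reader to inspect the proof in the cited reference to confirm which hypotheses are actually used. The key step in your argument --- that palindromic edges of length $n+1$ join $u$ to $u^{R}$ and hence become loops in the quotient, so they cannot contribute to connecting $H$ --- is sound and is indeed the heart of why the palindromic counts appear on the left of the inequality.
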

\begin{proof}
The inequality (\ref{g6d58ww5d65h4s}) is shown in \cite{BaMaPe} for infinite words closed under reversal. However, inspecting the proof of Theorem 1.2 (ii) in \cite{BaMaPe} we conclude that the inequality (\ref{g6d58ww5d65h4s}) holds if the Rauzy graph $\Gamma_n$ is strongly connected and if $L_{n+1}(w)$ (or $F(w,n+1)$ with our notation) is closed under reversal, since the map $\rho$ is then defined for all vertices and edges of the Rauzy graph $\Gamma_n$. See in \cite{BaMaPe} for the details of a construction of the Rauzy graph. Because we require $F(w,n+1)$ to be closed under reversal, we need only to prove that the Rauzy graph $\Gamma_n$ is strongly connected. 
For a finite word $w$ with $F(w,n+1)$ closed under reversal, the Rauzy graph $\Gamma_n$ is not necessarily strongly connected.  We show that the inequality (\ref{g6d58ww5d65h4s}) still holds.

Let $B$ denote the alphabet $B=A\cup\{x\}$; without loss of generality suppose that $x\not \in A$. Consider the word $\tilde w=wxw^R$ on the alphabet $B$. The word $\tilde w$ is closed under reversal,  since $\tilde w$ is a palindrome. 
Let $k\in \{n,n+1\}$ and \[\bar F(w,k)= \{uxv\mid u,v\in A^*\mbox{ and } uxv\in F(\tilde w)\mbox{ and }\vert uxv\vert=k\}\mbox{.}\]  
It is easy to see  that $F(\tilde w,k)=F(w,k)\cup \bar F(w,k)$. 
Obviously $\vert \bar F(w,k)\vert=k$. 
It follows that $F(w,n)\subset F(\tilde w,n)$, $F(w,n+1)\subset F(\tilde w,n+1)$,  
\begin{equation} 
\label{eqt58hp610}
\vert F(\tilde w,n)\vert = \vert F(w,n)\vert+n\mbox{,}
\end{equation}
and
\begin{equation}
\label{eqt58hp611}
\vert F(\tilde w,n+1)\vert = \vert F( w,n+1)\vert+n+1\mbox{.}
\end{equation}
There is just one palindrome in $\bar F(w,n)\cup \bar F(w,n+1)$, because every word in $\bar F(w,n)\cup \bar F(w,n+1)$ has exactly one occurrence of $x$. Consequently exactly one word in $\bar F(w,n)\cup \bar F(w,n+1)$ has the form $uxu^R$ for some $u\in A^*$. 
It follows that: 
\begin{equation}
\label{eqt58hp612}
\vert F_p(w,n)\vert + \vert F_p(w,n+1)\vert + 1=\vert F_p(\tilde w,n)\vert + \vert F_p(\tilde w,n+1)\vert
\end{equation}
The Rauzy graph $\tilde \Gamma_n$ of $\tilde w=wxw^R$ is strongly connected. Realize that $F(w,n+1)$ closed under reversal implies that $F(w,n+1)=F(w^R,n+1)$ and $F(w,n)=F(w^R,n)$. Hence we have that:
\begin{equation}
\label{eqt58hp6188g}
\vert F_p(\tilde w,n)\vert +\vert F_p(\tilde w,n+1)\vert \leq \vert F(\tilde w,n+1)\vert-\vert F(\tilde w,n)\vert+2\mbox{.}
\end{equation}
It follows then from (\ref{eqt58hp610}), (\ref{eqt58hp611}), (\ref{eqt58hp612}), and (\ref{eqt58hp6188g}) that: $$\vert F_p(w,n)\vert +\vert F_p(w,n+1)\vert \leq \vert F(w,n+1)\vert-\vert F(w,n)\vert+2\mbox{.}$$ This ends the proof.
\end{proof}
To clarify the previous proof, let us have a look at the example below:
\begin{example}
If $A=\{0,1\}$ and $w=1100100010011001010\in R$ then
\begin{itemize}
\item
$F(w,3)= \{110,100,001,010,000,011,101\}, \vert F(w,3)\vert=7$,
\item
$F(w,4)= \{1100,1001,0010,0100,1000,0001,0011,0110,0101,1010\},\\ \vert F(w,4)\vert=10$,
\item
$F_p(w,3)= \{010,000,101\}, \vert F_p(w,3) \vert=3$, and
\item
$F_p(w,4)=\{1001,0110\}, \vert F_p(w,4)\vert=2$.
\end{itemize}
It follows that:
\begin{itemize}
\item
$\vert F_p(w,3)\vert +\vert F_p(w,4)\vert = \vert F(w,4)\vert-\vert F(w,3)\vert+2$.
\item
$3+2=10-7+2$.
\item
$B=\{0,1,x\}$.
\item
$\tilde w=wxw^R=1100100010011001010x0101001100100010011$.
\item
$\bar F(\tilde w,3)=\{10x,0x0,x01\}$.
\item
$\bar F(\tilde w,4)=\{010x,10x0,0x01,x010\}$.
\item
$(\bar F(\tilde w,3)\cup \bar F(\tilde w,4)) \cap F_p(\tilde w)=\{0x0\}$.
\item
$\vert F_p(\tilde w,3)\vert +\vert F_p(\tilde w,4)\vert = \vert F(\tilde w,4)\vert-\vert F(\tilde w,3)\vert+2$.
\item
Thus $4+2=14-10+2$.
\end{itemize}
\end{example}
The next corollary generalizes the Proposition \ref{lhey544wp} to infinite words.
\begin{corollary}
If $w\in A^{\infty}$ with $F(w,n+1)$ closed under reversal, then 
$$\vert F_p(w,n)\vert +\vert F_p(w,n+1)\vert \leq \vert F(w,n+1)\vert-\vert F(w,n)\vert+2\mbox{.}$$
\end{corollary}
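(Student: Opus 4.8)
The statement to be proved is the corollary extending Proposition~\ref{lhey544wp} from finite words to infinite words $w\in A^{\infty}$ with $F(w,n+1)$ closed under reversal. The plan is to reduce the infinite case to the finite case by choosing a sufficiently long finite factor of $w$ that already "sees" all factors of length $n$ and $n+1$, together with all palindromic factors of those lengths. Since $w$ is infinite, for each $m$ the set $F(w,m)$ is finite (there are at most $q^m$ words of length $m$), so there exists an integer $N$ large enough that the prefix $p=w[1,N]$ satisfies $F(p,n)=F(w,n)$, $F(p,n+1)=F(w,n+1)$, $F_p(p,n)=F_p(w,n)$, and $F_p(p,n+1)=F_p(w,n+1)$. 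Concretely, each factor (palindromic or not) of length $\le n+1$ in $w$ occurs starting at some finite position, and taking $N$ to be larger than all those finitely many starting positions plus $n+1$ does the job.

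**Key steps.** First I would note that $p=w[1,N]\in A^+$ and $|p|=N\ge n+1$. Second, since $F(p,n+1)=F(w,n+1)$ by the choice of $N$, and $F(w,n+1)$ is closed under reversal by hypothesis, $F(p,n+1)$ is closed under reversal. Third, Proposition~\ref{lhey544wp} applies to $p$ with this value of $n$, yielding
\[
\vert F_p(p,n)\vert +\vert F_p(p,n+1)\vert \leq \vert F(p,n+1)\vert-\vert F(p,n)\vert+2\mbox{.}
\]
Fourth, substituting the four equalities $F_p(p,n)=F_p(w,n)$, $F_p(p,n+1)=F_p(w,n+1)$, $F(p,n)=F(w,n)$, $F(p,n+1)=F(w,n+1)$ into this inequality gives exactly the desired bound for $w$.

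**Main obstacle.** There is essentially no deep obstacle here; the only thing requiring a little care is the verification that such an $N$ exists, i.e. that finitely many factors (and palindromic factors) of bounded length each occur within a bounded prefix of $w$. This is immediate because $|A|=q$ is finite, so $|F(w,n)|\le q^n$ and $|F(w,n+1)|\le q^{n+1}$ are finite, hence only finitely many "first occurrence positions" are involved, and $N$ may be taken as their maximum plus $n+1$. The one subtlety worth a sentence is that it does not matter whether a palindromic factor's first occurrence in $p$ is recorded faithfully — we only need $F_p(p,k)\subseteq F_p(w,k)$ (trivial, since $p$ is a factor of $w$) together with the reverse inclusion, which follows once $N$ is large enough to contain an occurrence of every length-$k$ palindromic factor of $w$ for $k\in\{n,n+1\}$. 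Once $N$ is fixed, the argument is a direct appeal to Proposition~\ref{lhey544wp} and a substitution, so the proof is short.
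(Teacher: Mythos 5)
Your proof is correct and takes essentially the same approach as the paper: both reduce to Proposition \ref{lhey544wp} by passing to a finite prefix $v$ of $w$ with $F(v,n+1)=F(w,n+1)$ (the paper merely phrases this as a contradiction argument rather than a direct substitution). Your extra care about palindromic factors is harmless but redundant, since $F_p(v,k)=F(v,k)\cap P_k$ agrees with $F_p(w,k)$ automatically once the factor sets of lengths $n$ and $n+1$ agree.
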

\begin{proof}
Suppose that there is $w\in A^{\infty}$ with $F(w,n+1)$ such that 
$$\vert F_p(w,n)\vert +\vert F_p(w,n+1)\vert > \vert F(w,n+1)\vert-\vert F(w,n)\vert+2\mbox{.}$$
Then obviously there is a prefix $v\in A^+$ of $w$ such that $$F(v,n+1)=F(w,n+1)$$ 
and 
$$\vert F_p(v,n)\vert +\vert F_p(v,n+1)\vert > \vert F(v,n+1)\vert-\vert F(v,n)\vert+2\mbox{.}$$ This is a contradiction Proposition \ref{lhey544wp}, since $v$ is a finite word with $F(v,n+1)$ closed under reversal.
\end{proof}
For rich words the inequality may be replaced with equality. This result has been proved also in \cite{BuLuGlZa}.
\begin{lemma}
\label{lmh_58fe6}
If $w\in R$ is a rich word such that $F(w,n+1)$ is closed under reversal, $\vert w\vert\geq n+1$, and $n>0$, then $$\vert F_p(w,n)\vert +\vert F_p(w,n+1)\vert= \vert F(w,n+1)\vert-\vert F(w,n)\vert+2\mbox{.}$$
\end{lemma}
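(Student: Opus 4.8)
The inequality $\vert F_p(w,n)\vert+\vert F_p(w,n+1)\vert\le\vert F(w,n+1)\vert-\vert F(w,n)\vert+2$ is exactly Proposition~\ref{lhey544wp}, so only the reverse inequality needs to be proved, and the plan is to obtain it by the same device used there: pass to $\tilde w = w x w^{R}$ over $B = A\cup\{x\}$ with $x\notin A$. In the proof of Proposition~\ref{lhey544wp} it is shown that $\tilde w$ is a palindrome, that its Rauzy graph $\tilde\Gamma_n$ is strongly connected, and that $\vert F(\tilde w,n)\vert = \vert F(w,n)\vert + n$, $\vert F(\tilde w,n+1)\vert = \vert F(w,n+1)\vert + n+1$ and $\vert F_p(\tilde w,n)\vert + \vert F_p(\tilde w,n+1)\vert = \vert F_p(w,n)\vert + \vert F_p(w,n+1)\vert + 1$. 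Substituting these three identities shows that the reverse inequality for $w$ is equivalent to
\[
\vert F_p(\tilde w,n)\vert + \vert F_p(\tilde w,n+1)\vert \ge \vert F(\tilde w,n+1)\vert - \vert F(\tilde w,n)\vert + 2,
\]
so it suffices to prove this for $\tilde w$.

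The first step I would carry out is to check that $\tilde w$ is rich, using the criterion that a finite word is rich iff each of its prefixes has a longest palindromic suffix occurring exactly once in that prefix~\cite{DrJuPi}. Let $p$ be a prefix of $\tilde w$. If $p$ is a prefix of $w$, the criterion holds because $w$ is rich; if $p=wx$, the longest palindromic suffix of $p$ is the letter $x$, which occurs once. Otherwise $p = wxs^{R}$ for a nonempty suffix $s$ of $w$; then $sxs^{R}$ is a palindromic suffix of $p$, while any palindromic suffix of $p$ of length greater than $\vert s\vert$ contains the single letter $x$, which is then forced to sit at its centre, so such a suffix has length exactly $2\vert s\vert+1$ and thus equals $sxs^{R}$. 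Hence $\lps(p)=sxs^{R}$, and it occurs exactly once in $p$ because $x$ does. So $\tilde w$ is rich, and being a palindrome it is closed under reversal.

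Now $\tilde w$ is rich, closed under reversal, and has strongly connected Rauzy graph $\tilde\Gamma_n$, so the plan is to apply the reverse inequality for rich words closed under reversal, which is the content of the result of Bucci, De~Luca, Glen and Zamboni~\cite{BuLuGlZa}. That result is stated for infinite words, but its proof uses only closure of the language under reversal, the property that every complete return to a palindrome in a rich word is a palindrome~\cite{GlJuWiZa}, and --- in place of infiniteness --- strong connectivity of the relevant Rauzy graph; this last substitution is exactly the localisation already carried out (for the companion inequality of~\cite{BaMaPe}) in the proof of Proposition~\ref{lhey544wp}. Since all three ingredients are available for $\tilde w$, the displayed inequality holds for $\tilde w$, and therefore, by the equivalence noted above, the reverse inequality holds for $w$; combined with Proposition~\ref{lhey544wp} this gives the asserted equality.

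The step needing the most care --- and the main obstacle --- is the previous paragraph: one must verify that the return-word counting of~\cite{BuLuGlZa} really localises verbatim to a finite word with strongly connected Rauzy graph, the delicate issue being the boundary occurrences of short factors near the two ends of $\tilde w$. If one prefers to avoid relying on the internal mechanics of~\cite{BuLuGlZa}, the reverse inequality for $\tilde w$ can instead be proved directly by a length induction on the prefixes of $\tilde w$: track $\vert F(v,n+1)\vert-\vert F(v,n)\vert-\vert F_p(v,n+1)\vert-\vert F_p(v,n)\vert$ as $v$ runs over these prefixes; richness forces at most one new palindrome per appended letter, so this quantity changes by at most $1$ at each step and can increase only when a new non-palindromic factor of length $n+1$ appears whose length-$n$ suffix has already occurred. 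Using once more that complete returns to palindromes are palindromes, together with closure under reversal of $F(\tilde w,n+1)$, one shows that every such increase is compensated by a later decrease, so that the final value is $-2$ --- equivalently, the required equality. Making that compensation argument precise is the technical heart of the alternative route.
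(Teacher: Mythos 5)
Your reduction is the same one the paper uses: pass to $\tilde w=wxw^R$, note that the counting identities from the proof of Proposition~\ref{lhey544wp} turn the claim for $w$ into the corresponding claim for $\tilde w$, and establish that $\tilde w$ is rich. Your richness argument (via the unioccurrent longest palindromic suffix of each prefix, with $sxs^R$ forced because the unique $x$ must sit at the centre of any palindromic suffix meeting it) is correct and in fact more self-contained than the paper's, which simply observes that $\tilde w$ is the palindromic closure of the rich word $wx$.

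The gap is in the central step, and you have correctly identified it yourself: the reverse inequality for $\tilde w$ is never actually proved. Your primary route asserts that the proof of the infinite-word equality in \cite{BuLuGlZa} ``localises verbatim'' to a finite word with strongly connected Rauzy graph, but you concede this needs verification and do not carry it out; your fallback is an induction over prefixes whose compensation argument you explicitly leave open. Since Proposition~\ref{lhey544wp} already gives one inequality, this unproved reverse inequality is the entire content of Lemma~\ref{lmh_58fe6}, so as written the proof is incomplete. The paper closes this step differently and cleanly: it invokes Proposition~3 of \cite{BrRe-conjecture}, a statement for \emph{finite} words whose factor set is closed under reversal, expressing twice the palindromic defect $D$ as the sum over all lengths of the nonnegative quantities $\vert F(\cdot,n+1)\vert-\vert F(\cdot,n)\vert+2-\vert F_p(\cdot,n)\vert-\vert F_p(\cdot,n+1)\vert$. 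Applied to the rich palindrome $\tilde w$ (defect zero, fully closed under reversal), every summand vanishes, which is exactly the equality you need for $\tilde w$ at level $n$; no localisation of an infinite-word argument is required. If you want to keep your structure, replacing your appeal to \cite{BuLuGlZa} by this finite-word defect identity would complete the proof.
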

\begin{proof}
In the proof of Proposition \ref{lhey544wp} the word $\tilde w=wxw^R$ is rich if $w$ is rich. To see this, suppose that $w$ is rich. Then $wx$ is rich, because $\lps(wx)=x$, which is a unioccurrent palindrome in $wx$ and $wxw^R$ is a palindromic closure of $wx$, which preserves richness \cite{GlJuWiZa}. 
Then the equality
follows from Proposition 3 in \cite{BrRe-conjecture}; the proposition uses the
palindromic defect $D(w)$ of a word, which is, by definition, equal to
zero for a rich word.
\end{proof}
Based on Lemma \ref{lmh_58fe6} we can present a new relation for palindromic and factor complexity.
\begin{proposition}
\label{uq222kp69}
Let $\hat F_p(w,k)=\max\{\vert F_p(w,j)\vert \mid 0\leq j\leq k\}$.
If $w\in R$ is a rich word such that $F(w,n+1)$ is closed under reversal, $\vert w\vert\geq n+1$, and $n>0$, then
$$\vert F(w,n)\vert\leq 2(n-1)\hat F_p(w,n)-2(n-1) +q\mbox{.}$$
\end{proposition}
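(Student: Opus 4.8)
The plan is to iterate the equality of Lemma~\ref{lmh_58fe6} over all lengths from $1$ up to $n$ and telescope. The first step is an easy preliminary observation: the hypothesis that $F(w,n+1)$ be closed under reversal propagates downward. Indeed, since $\vert w\vert\geq n+1$, every factor of $w$ of length $k\leq n+1$ occurs inside some factor of length $n+1$; the reversal of that length-$(n+1)$ factor is again a factor of $w$ by closure at level $n+1$, so the length-$k$ sub-factor read backwards also occurs in $w$. Hence $F(w,k)$ is closed under reversal for every $k$ with $1\leq k\leq n+1$. Consequently the hypotheses of Lemma~\ref{lmh_58fe6} are satisfied with $n$ replaced by any $k\in\{1,\dots,n\}$, the length requirement $\vert w\vert\geq k+1$ being immediate from $\vert w\vert\geq n+1$.

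Second, for each $k\in\{1,\dots,n-1\}$ Lemma~\ref{lmh_58fe6} gives
$$\vert F_p(w,k)\vert+\vert F_p(w,k+1)\vert=\vert F(w,k+1)\vert-\vert F(w,k)\vert+2\mbox{.}$$
Summing these $n-1$ equalities, the right-hand side telescopes to $\vert F(w,n)\vert-\vert F(w,1)\vert+2(n-1)$, so that
$$\vert F(w,n)\vert=\sum_{k=1}^{n-1}\bigl(\vert F_p(w,k)\vert+\vert F_p(w,k+1)\vert\bigr)+\vert F(w,1)\vert-2(n-1)\mbox{.}$$

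Third, the displayed sum is a sum of exactly $2(n-1)$ terms, each of the form $\vert F_p(w,j)\vert$ with $1\leq j\leq n$, hence each at most $\hat F_p(w,n)$ by the definition of $\hat F_p$. Combining this with the trivial bound $\vert F(w,1)\vert\leq q$ (the alphabet has $q$ letters) yields $\vert F(w,n)\vert\leq 2(n-1)\hat F_p(w,n)-2(n-1)+q$, which is the claim; the degenerate case $n=1$ is covered directly, as the empty sum reduces the inequality to $\vert F(w,1)\vert\leq q$.

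I do not expect a genuine obstacle here. The only point needing a little care is the downward propagation of reversal-closure in the first paragraph, since it is what legitimizes invoking Lemma~\ref{lmh_58fe6} at each intermediate length $k<n$; everything else is a routine telescoping followed by the crude estimate $\vert F_p(w,j)\vert\leq\hat F_p(w,n)$.
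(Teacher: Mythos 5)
Your proof is correct and follows essentially the same route as the paper: sum the equality of Lemma~\ref{lmh_58fe6} over lengths $1$ to $n-1$, telescope the factor-complexity side, and bound each palindromic term by $\hat F_p(w,n)$ and $\vert F(w,1)\vert$ by $q$. Your explicit justification of the downward propagation of reversal-closure (which the paper merely asserts) and your handling of the $n=1$ case are welcome refinements, but the argument is the same.
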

\begin{proof}
Lemma \ref{lmh_58fe6} states that: 
\begin{equation}
\label{eqtn_revz45_898}
\vert F_p(w,n)\vert +\vert F_p(w,n+1)\vert-2 = \vert F(w,n+1)\vert-\vert F(w,n)\vert\mbox{.}
\end{equation}
Since $F(w,n+1)$ closed under reversal, we have that $F(w,i)$ is closed under reversal for $i\leq n+1$. We can sum (\ref{eqtn_revz45_898}) over all lengths $i\leq n$:
\begin{equation}
\label{eqtn_revz45_1}
\sum_{i=1}^{n-1} (\vert F_p(w,i)\vert +\vert F_p(w,i+1)\vert-2)=\sum_{i=1}^{n-1} (\vert F(w,i+1)\vert-\vert F(w,i)\vert)\mbox{.}
\end{equation}
The sums from (\ref{eqtn_revz45_1}) may be expressed as follows:
\begin{equation}
\label{eqtn_revz45_2}
\begin{split}
\sum_{i=1}^{n-1} (\vert F(w,i+1)\vert-\vert F(w,i)\vert) = F(w,2)-F(w,1) + F(w,3)-F(w,2) \\ + F(w,4)-F(w,3)+\dots + F(w,n-1)-F(w,n-2) \\ +F(w,n)-F(w,n-1) = F(w,n)-F(w,1)\mbox{.}
\end{split}
\end{equation}
\begin{equation}
\label{eqtn_revz45_3}
\sum_{i=1}^{n-1} (\vert F_p(w,i)\vert +\vert F_p(w,i+1)\vert-2)\leq (n-1)(\hat F_p(w,n-1) + \hat F_p(w,n)-2)\mbox{.}
\end{equation}
From (\ref{eqtn_revz45_1}), (\ref{eqtn_revz45_2}), and (\ref{eqtn_revz45_3}) we get:
$$F(w,n)-F(w,1)\leq (n-1)(\hat F_p(w,n-1) + \hat F_p(w,n)-2)\mbox{.}$$
It follows that:
$$F(w,n)\leq (n-1)(2\hat F_p(w,n)-2) +F(w,1)\mbox{.}$$
This can be reformulated as:
$$F(w,n)\leq 2(n-1)\hat F_p(w,n)-2(n-1) +F(w,1)\mbox{.}$$
Since $F(w,1)\leq q$ it follows that:
$$F(w,n)\leq 2(n-1)\hat F_p(w,n)-2(n-1) +q\mbox{.}$$
This completes the proof.
\end{proof}
Proposition \ref{uq222kp69} and Lemma \ref{kop558g4} imply an improvement to our upper bound for the factor complexity for rich words with $F(w,n+1)$ closed under reversal:
\begin{corollary}
\label{pr_65dfr69we85}
If $w\in R$ with $F(w,n+1)$ closed under reversal, $\vert w\vert\geq n+1$, and $n>0$, then:
$$\vert F(w,n)\vert \leq 2(n-1)(q+1)n(4q^{10}n)^{\log_2{n}}-2(n-1) +q\mbox{.}$$
\end{corollary}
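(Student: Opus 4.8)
The plan is to simply combine Proposition~\ref{uq222kp69} with Corollary~\ref{kop558g4}. Proposition~\ref{uq222kp69} applies verbatim under the hypotheses of the present statement and yields $\vert F(w,n)\vert\leq 2(n-1)\hat F_p(w,n)-2(n-1)+q$, where $\hat F_p(w,n)=\max\{\vert F_p(w,j)\vert\mid 0\leq j\leq n\}$. Thus it suffices to replace $\hat F_p(w,n)$ by the explicit quantity $(q+1)n(4q^{10}n)^{\log_2{n}}$.

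First I would check that the bound of Corollary~\ref{kop558g4} dominates every term occurring in the maximum that defines $\hat F_p(w,n)$. For $1\leq j\leq n$ we have $\vert F_p(w,j)\vert\leq (q+1)j(4q^{10}j)^{\log_2{j}}$, and since both the polynomial factor and the exponent $\log_2{j}$ are nondecreasing in $j$ for $j\geq 1$, this right-hand side is at most $(q+1)n(4q^{10}n)^{\log_2{n}}$; for $j=0$ we have $\vert F_p(w,0)\vert=1$, which is also at most that same quantity because $q>1$ and $n>0$. Hence $\hat F_p(w,n)\leq (q+1)n(4q^{10}n)^{\log_2{n}}$.

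Finally I would substitute this estimate into the inequality of Proposition~\ref{uq222kp69}. As the coefficient $2(n-1)$ is nonnegative for $n>0$, enlarging $\hat F_p(w,n)$ only enlarges the right-hand side, so $\vert F(w,n)\vert\leq 2(n-1)(q+1)n(4q^{10}n)^{\log_2{n}}-2(n-1)+q$, which is the claimed bound. There is essentially no obstacle in this argument; the only point that deserves an explicit word is the monotonicity of $j\mapsto (q+1)j(4q^{10}j)^{\log_2{j}}$, which is used to pass from the per-length bound of Corollary~\ref{kop558g4} to a bound on $\hat F_p(w,n)$.
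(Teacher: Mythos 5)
Your proposal is correct and is exactly the route the paper intends: the corollary is stated as an immediate consequence of Proposition~\ref{uq222kp69} combined with the palindromic-complexity bound of Corollary~\ref{kop558g4}, with the only detail being that $\hat F_p(w,n)$ is bounded by the value of that bound at $j=n$, which your monotonicity remark justifies. No gap.
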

Since the palindromic closure of finite rich words is closed under reversal, we can improve the upper bound for factor complexity for finite rich words.
\begin{corollary}
If $w\in R$ and $n>0$ then
$$\vert F(w,n)\vert \leq 2(2n-1)(q+1)2n(8q^{10}n)^{\log_2{2n}}-2(2n-1) +q\mbox{.}$$
\end{corollary}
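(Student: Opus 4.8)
The plan is to reduce the claim to Corollary~\ref{pr_65dfr69we85} by enclosing the given word in a reversal-closed rich word. First I would dispose of the easy cases. If $w\in R^{\infty}$, choose a finite prefix $v\in R^{+}$ of $w$ with $F(v,n)=F(w,n)$ (take $v$ long enough to contain every length-$n$ factor of $w$); since every factor of a rich word is rich \cite{GlJuWiZa}, this $v$ is rich, so it suffices to bound $|F(v,n)|$, and we may assume $w\in R^{+}$ is finite. If $|w|<n$ then $F(w,n)=\emptyset$, and if $|w|=n$ then $|F(w,n)|=1$; in both cases the asserted inequality is immediate, so we may assume $|w|\ge n+1$.

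The key construction is the right palindromic closure $w'$ of $w$, i.e., the shortest palindrome having $w$ as a prefix. Then $w'\in R^{+}$, because palindromic closure preserves richness \cite{GlJuWiZa}; being a palindrome, $w'$ has $F(w')$ closed under reversal, so in particular $F(w',n+1)$ is closed under reversal; and $|w'|\ge|w|\ge n+1$. Hence all hypotheses of Corollary~\ref{pr_65dfr69we85} hold for $w'$, which gives $|F(w',n)|\le 2(n-1)(q+1)n(4q^{10}n)^{\log_2 n}-2(n-1)+q$. Since $w$ is a prefix of $w'$ we have $F(w,n)\subseteq F(w',n)$, so the same bound holds for $|F(w,n)|$. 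Now observe that the claimed bound is precisely the right-hand side of Corollary~\ref{pr_65dfr69we85} evaluated at $2n$ in place of $n$: indeed $4q^{10}\cdot 2n=8q^{10}n$ and $\log_2 2n=1+\log_2 n$. Since that right-hand side is increasing in its argument (the leading product grows super-polynomially, absorbing the linear change in the term $-2(n-1)$), the bound we have at parameter $n$ implies the claimed one at parameter $2n$. (Equivalently, one can get the $2n$-form directly by exhibiting an injection $F(w,n)\hookrightarrow F(w',2n)$ sending each length-$n$ factor of $w$ to the length-$2n$ factor of a sufficiently long reversal-closed rich prolongation $w'$ that extends a fixed occurrence of it, and then invoking Corollary~\ref{pr_65dfr69we85} at parameter $2n$; this is presumably the route the author intends.)

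There is no serious obstacle. The one point that genuinely matters is that the operation used to make the factor set closed under reversal must keep the word rich, since Corollary~\ref{pr_65dfr69we85} — resting on Proposition~\ref{uq222kp69} and Lemma~\ref{lmh_58fe6} — is stated only for rich words. This is exactly why the right palindromic closure is the correct tool: it is simultaneously reversal-closed, richness-preserving, and defined over the same alphabet of size $q$, so, unlike the auxiliary extension $wxw^{R}$ over an enlarged alphabet used earlier, it incurs no $q\mapsto q+1$ loss.
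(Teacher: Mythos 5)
Your proof is correct and follows essentially the same route as the paper: form the right palindromic closure of $w$, which is rich and has its factor set closed under reversal, and invoke Corollary~\ref{pr_65dfr69we85}. You are in fact somewhat more careful than the paper's two-line argument, since you treat the infinite and short-word edge cases explicitly and justify passing from the bound at parameter $n$ to the stated bound at parameter $2n$ by monotonicity, where the paper simply substitutes $2n$ for $n$.
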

\begin{proof}
Palindromic closure $\hat w$ of a word $w\in R$ preserves richness. Furthermore $F(\hat w)$ is closed under reversal, $F(w)\subseteq F(\hat w)$, and $\vert \tilde w\vert\leq 2\vert w\vert$ \cite{GlJuWiZa}. Hence we can apply Corollary \ref{pr_65dfr69we85}, where we replace $n$ with $2n$. 
\end{proof}
The purpose of the next obvious corollary is to present a ``simple'' (although a somewhat worse) upper bound for the factor complexity.
\begin{corollary}
If $w\in R$ and $n>0$ then
$$\vert F(w,n)\vert \leq (q+1)8n^2(8q^{10}n)^{\log_2{2n}}+q\mbox{.}$$
\end{corollary}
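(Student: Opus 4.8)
The plan is to derive this bound directly from the immediately preceding corollary, which already gives
$$\vert F(w,n)\vert \leq 2(2n-1)(q+1)2n(8q^{10}n)^{\log_2{2n}}-2(2n-1) +q\mbox{,}$$
for $w\in R$ and $n>0$. Since the only difference between the two statements is the precise shape of the leading coefficient and the presence of the term $-2(2n-1)$, the entire argument is a routine arithmetic simplification; there is no real obstacle beyond checking two elementary inequalities.

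First I would discard the term $-2(2n-1)$: because $n>0$ we have $2(2n-1)\geq 0$, so removing it can only make the right-hand side larger, leaving the weaker but still valid bound
$$\vert F(w,n)\vert \leq 2(2n-1)(q+1)2n(8q^{10}n)^{\log_2{2n}} +q\mbox{.}$$
Next I would absorb the polynomial prefactor: the product of the two length-dependent factors satisfies
$$2(2n-1)\cdot 2n = 4n(2n-1) = 8n^2-4n \leq 8n^2\mbox{,}$$
again using $n>0$. Substituting this estimate for $2(2n-1)2n$ and keeping the factor $(q+1)$ and the term $(8q^{10}n)^{\log_2{2n}}$ untouched yields
$$\vert F(w,n)\vert \leq (q+1)8n^2(8q^{10}n)^{\log_2{2n}}+q\mbox{,}$$
which is exactly the claimed inequality. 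This completes the proof; as noted, the step I expect to require the most care is merely making sure the sign of the dropped term and the comparison $8n^2-4n\le 8n^2$ are applied in the direction that preserves the inequality, which they are for all $n>0$.
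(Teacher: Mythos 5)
Your proof is correct and is exactly the routine simplification the paper intends: the corollary is stated as an ``obvious'' consequence of the preceding bound, obtained by dropping the nonpositive term $-2(2n-1)$ and estimating $2(2n-1)\cdot 2n = 8n^2-4n\leq 8n^2$. Both steps are valid for all $n>0$, so nothing further is needed.
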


\section*{Acknowledgments}
The author wishes to thank to Štěpán Starosta for his useful comments. The author acknowledges support by the Czech Science
Foundation grant GA\v CR 13-03538S and by the Grant Agency of the Czech Technical University in Prague, grant No. SGS14/205/OHK4/3T/14.

\bibliographystyle{siam}
\IfFileExists{biblio.bib}{\bibliography{biblio}}{\bibliography{../!bibliography/biblio}}

\end{document}